\documentclass[11pt]{article}

\usepackage[margin=1.05in]{geometry}
\usepackage{amsmath,amssymb,amsthm,mathtools}
\usepackage{bm}
\usepackage{mathrsfs}
\usepackage{booktabs}
\usepackage{enumitem}
\usepackage{hyperref}
\usepackage{microtype}

\newtheorem{theorem}{Theorem}[section]
\newtheorem{proposition}[theorem]{Proposition}
\newtheorem{corollary}[theorem]{Corollary}
\newtheorem{lemma}[theorem]{Lemma}
\theoremstyle{definition}
\newtheorem{definition}[theorem]{Definition}
\newtheorem{remark}[theorem]{Remark}

\title{\textbf{Young-lattice diagonals and a doubly graded multiple-zeta decomposition of $e^\gamma$}
}
\author{Ricardo G\'omez-A\'iza}
\date{August 2026}

\begin{document}
\maketitle

\begin{abstract}
An equivalent formulation of the Riemann hypothesis recently led to a
partition expansion naturally indexed by diagonals of the Young lattice.
Segovia isolated the hook families $(r,1^m)$ on these diagonals and
computed their limiting contributions $\rho_r$, while observing that
non-hook families provide a missing contribution.

We introduce a bivariate finite generating function that packages all
Young shapes on every fixed-excess diagonal at once.  For each fixed
$r\geq1$, we obtain a diagonal generating polynomial $D_r(n;z)$ and prove
\[
        A_r(n)\sim C_r\,n\log\log n,
\]
where $C_r$ is the $(r-1)$st coefficient of an explicit convergent
infinite product.  Moreover,
\[
        C_r=\sum_{\nu\vdash r-1} C_\nu ,
\]
giving a canonical decomposition over the partitions of the excess
$r-1$.  The one-part contribution is Segovia's hook constant $\rho_r$,
while the remaining terms give all non-hook corrections simultaneously.

We then refine these constants by introducing coefficients $C_{r,d}$
that record simultaneously the Young-lattice excess $r-1$ and the
number $d$ of non-unit rows.  Row sums recover the fixed-excess constants
$C_r$, while column sums recover the depth decomposition in an
Abel-regularized multiple-zeta expansion of $e^\gamma$.  More precisely,
each partition $\nu\vdash r-1$ is identified with an Abel-regularized
multiple-zeta block of depth $\ell(\nu)$.  Thus the same array
$(C_{r,d})$ organizes the decomposition simultaneously by Young-lattice
excess and multiple-zeta depth.  Our results concern the combinatorial
and asymptotic structure of this decomposition, rather than the Riemann
hypothesis itself.
\end{abstract}

\section{Introduction}

An unexpected connection between the Riemann hypothesis and the
combinatorics of integer partitions has recently emerged from a
criterion proposed by Espinosa.  Starting from classical inequalities
involving the divisor-sum function, Espinosa obtained an equivalent
formulation of the Riemann hypothesis expressed in terms of series
in the harmonic numbers.  His subsequent partition-based identities,
including the Assembly Theorem, provide a natural combinatorial
framework for these expansions.
Segovia gave a rigorous treatment of the resulting criterion and
organized its terms in the Young lattice, introducing the quantities
$E_{i,j}(n)$ and the fixed-diagonal sums that form the starting point
of the present work~\cite{Segovia2026}.
Here we are concerned with the combinatorial and asymptotic
structure that arises from this formulation.  We emphasize from the
outset that our results do not constitute progress toward proving the
Riemann hypothesis.  Rather, we study the partition sums that occur in
the Espinosa--Segovia criterion and show that they can be organized by
a unified generating function.  This point of view leads, in
particular, to a complete description of the fixed diagonals studied
by Segovia and reveals a second grading related to multiple zeta
values.

To describe the problem, let
\[
 E_{i,j}(n)
 =
 \sum_{\substack{\mu\vdash j\\ \ell(\mu)=i}}
 \frac{
 m_\mu(1,1/2,\ldots,1/n)
 }{\mu_1!\cdots\mu_i!},
\]
where $m_\mu$ denotes the monomial symmetric polynomial associated
with the partition $\mu$.  The quantities appearing in Segovia's
diagonal decomposition are
\[
 A_r(n)
 =
 \sum_{i=1}^{n}
 \log(i+r)\,E_{i,i+r-1}(n).
\]
Thus, for fixed $r$, the relevant partitions satisfy
\[
        |\mu|-\ell(\mu)=r-1.
\]
Geometrically, these are the partitions lying on a fixed diagonal of
the Young lattice.

Segovia's analysis follows a particularly simple family along each
such diagonal, namely the hooks.  We write partitions throughout
using parentheses; thus the hook partition
\[
        (r,1^m)
\]
is the shape denoted $[r,1^m]$ in \cite{Segovia2026}.  We use
square brackets only when referring explicitly to Segovia's notation.
He associates to these families constants $\rho_r$ and computes
partial sums of the hook contributions approaching
\[
        1.635904\ldots,
\]
well below $e^\gamma$.  The difference is
accounted for by non-hook partitions.  Already in the first cases,
Segovia identifies families such as
\[
(2,2,1^m),\qquad
(3,2,1^m),\qquad
(2,2,2,1^m)
\]
and points to the need to understand these additional contributions
systematically \cite{Segovia2026}.

There is a simple pattern behind these families that is useful to make
explicit.  Ignoring for the moment the arbitrary trailing parts equal
to $1$, the first few fixed-excess diagonals contain
\[
\begin{array}{c|l}
 r=2 & (2) \\[1mm]
 r=3 & (3),\ (2,2) \\[1mm]
 r=4 & (4),\ (3,2),\ (2,2,2) \\[1mm]
 r=5 & (5),\ (4,2),\ (3,3),\
       (3,2,2),\ (2,2,2,2).
\end{array}
\]
Subtracting $1$ from every non-unit part transforms this array into
\[
\begin{array}{c|l}
 r=2 & (1) \\[1mm]
 r=3 & (2),\ (1,1) \\[1mm]
 r=4 & (3),\ (2,1),\ (1,1,1) \\[1mm]
 r=5 & (4),\ (3,1),\ (2,2),\
       (2,1,1),\ (1,1,1,1).
\end{array}
\]
The entries in the $r$th row are therefore precisely the partitions
\[
        \nu\vdash r-1.
\]
More generally, if
\[
        \nu=(\nu_1,\ldots,\nu_d)\vdash r-1,
\]
then the corresponding family on the original Young-lattice diagonal is
\[
        (\nu_1+1,\ldots,\nu_d+1,1^m),
        \qquad m\geq0.
\]
In particular, Segovia's hook family corresponds to the one-part partition
\[
        \nu=(r-1).
\]

This simple observation suggests that the non-hook terms should not
be treated as a list of exceptional shapes.  They are naturally
indexed by the partitions of the excess $r-1$ itself.

More generally, this is an instance of a familiar principle in
partition enumeration: families that appear unrelated when considered
shape by shape may become considerably simpler once they are encoded
by a common generating function.
This point of view also underlies the unified
treatment of families of partition functions developed in
\cite{AhmadiGomezWard2024}, where integer partitions, Young tableaux,
and permutation cycles are organized within a common combinatorial
framework.  Although the generating functions considered here arise
from a different problem, the same organizing principle motivates our
approach.

Accordingly, rather than continuing the analysis one Young shape at a
time, we package the whole finite problem into the generating function
\begin{equation}\label{eq:Fmaster-intro}
 F_n(u,v)
 =
 \prod_{q=1}^n
 \left(1+v(e^{u/q}-1)\right).
\end{equation}
The two variables have a direct combinatorial meaning: the exponent
of $v$ records the number of parts, while the exponent of $u$ records
their total size.  We prove that
\[
        [u^jv^i]F_n(u,v)=E_{i,j}(n).
\]
Consequently, replacing $v$ by $z/u$ and extracting the coefficient
of $u^{r-1}$ collects an entire fixed-excess diagonal:
\begin{equation}\label{eq:Dr-intro}
 D_r(n;z)
 =
 [u^{r-1}]
 \prod_{q=1}^{n}
 \left(
 1+z\frac{e^{u/q}-1}{u}
 \right)
 =
 \sum_i E_{i,i+r-1}(n)z^i.
\end{equation}
Thus the shape-by-shape Young-lattice calculation is replaced by a
single coefficient extraction.

The finite product \eqref{eq:Dr-intro} also makes the asymptotic
behavior of a fixed diagonal accessible.  After separating the
dominant factor
\[
        \prod_{q=1}^n\left(1+\frac{z}{q}\right),
\]
we are led to the convergent infinite product
\begin{equation}\label{eq:Hcal-intro}
 \mathcal H(u)
 =
 \prod_{q=1}^\infty
 \frac{q}{q+1}
 \left(
 1+\frac{e^{u/q}-1}{u}
 \right).
\end{equation}
Writing
\[
        C_r=[u^{r-1}]\mathcal H(u),
\]
we prove, for every fixed $r$,
\begin{equation}\label{eq:main-intro}
        A_r(n)\sim C_r\,n\log\log n.
\end{equation}
The two factors in this asymptotic have a transparent origin.  The
total mass on the $r$th diagonal is asymptotic to $C_r n$, whereas,
with respect to the corresponding normalized distribution, the
typical number of parts is of order $\log n$.  The logarithmic weight
appearing in $A_r(n)$ therefore contributes a further factor
$\log\log n$.

The constants $C_r$ admit a finer decomposition that retains the
Young-shape information on the corresponding diagonal.  More precisely, we associate a positive constant $C_\nu$ with every
partition $\nu\vdash r-1$ and obtain
\begin{equation}\label{eq:Cpartition-intro}
        C_r=\sum_{\nu\vdash r-1}C_\nu.
\end{equation}
The contribution of the one-part partition is exactly Segovia's hook
constant:
\[
        C_{(r-1)}=\rho_r.
\]
It follows that
\[
        C_r-\rho_r
        =
        \sum_{\substack{\nu\vdash r-1\\
                        \ell(\nu)\geq2}}
        C_\nu.
\]
In this sense, the non-hook terms that first appear individually in
the low diagonals are the beginning of a canonical partition
decomposition of the entire correction.

The partition refinement carries a second natural statistic.  If
\[
        \nu=(\nu_1,\ldots,\nu_d)\vdash r-1,
\]
then
\[
        |\nu|=r-1,
        \qquad
        \ell(\nu)=d.
\]
The first is the Young-lattice excess already used above.  The second
counts the number of non-unit rows in the corresponding Young shape.
Introducing a variable for each statistic leads to the doubly graded
product
\begin{equation}\label{eq:Hscr-intro}
 \mathscr H(u,y)
 =
 \prod_{q=1}^\infty
 \left[
 1+
 y\,\frac{q(e^{u/q}-1)-u}{u(q+1)}
 \right].
\end{equation}
We define
\[
        C_{r,d}
        =
        [u^{r-1}y^d]\mathscr H(u,y).
\]
The coefficients $C_{r,d}$ form a triangular array:
\[
\begin{array}{c|ccccc}
 & d=1 & d=2 & d=3 & d=4 & \cdots \\ \hline
 r=2 & C_{2,1} \\
 r=3 & C_{3,1} & C_{3,2} \\
 r=4 & C_{4,1} & C_{4,2} & C_{4,3} \\
 r=5 & C_{5,1} & C_{5,2} & C_{5,3} & C_{5,4}\\
 \vdots & \vdots & \vdots & \vdots & \vdots & \ddots
\end{array}
\]
and the first column is precisely
\[
        C_{r,1}=\rho_r.
\]

This array provides the point at which two apparently different
decompositions meet.  Reading a row gives
\[
        C_r=\sum_d C_{r,d},
\]
and therefore collects all Young shapes of a fixed excess.  Reading a
column instead gives
\[
        S_d=\sum_r C_{r,d},
\]
which collects contributions with a fixed number of non-unit rows.
We show that this second statistic is exactly the depth appearing in
Espinosa's multiple-zeta-value expansion of $e^\gamma$.

The identification with multiple zeta values requires some care,
since the relevant alternating expansions are not absolutely
convergent at the endpoint.  We handle this by introducing an Abel
regulator: the multiple zeta values (MZV) identities are first established in an absolutely
convergent regime and the regulator is then allowed to approach its
endpoint.  This gives, in particular, a rigorous identification of
each $C_\nu$ with an Abel-regularized MZV block of depth
$d=\ell(\nu)$:
\[
\boxed{
\begin{array}{c}
\text{partition }\nu = (\nu_1,\ldots , \nu_d)
\\[1mm]
\updownarrow\\[-1mm]
\text{Young family }
(\nu_1+1,\ldots,\nu_d+1,1^m)
\\[1mm]
\updownarrow\\[-1mm]
\text{MZV block of depth }d .
\end{array}}
\]
The two coordinates of the array $C_{r,d}$ consequently record
Young-lattice excess and MZV depth simultaneously.

Specializing \eqref{eq:Hscr-intro} at $u=y=1$ gives
\[
 \mathscr H(1,1)
 =
 \prod_{q\geq1}
 \frac{q}{q+1}e^{1/q}
 =
 e^\gamma.
\]
Since the coefficients $C_{r,d}$ are nonnegative, Tonelli's theorem
shows that the row and column decompositions are two legitimate
orders of summation of the same positive mass:
\[
 e^\gamma
 =
 1+\sum_{r\geq2}\sum_{d=1}^{r-1}C_{r,d}
 =
 1+\sum_{d\geq1}\sum_{r\geq d+1}C_{r,d}.
\]
The first order is the fixed-excess decomposition suggested by the
Young lattice, whereas the second is the fixed-depth decomposition
associated with the MZV expansion.

Related connections between partition-theoretic structures and zeta
values occur in the theory of partition zeta functions developed by
Ono, Rolen, and Schneider~\cite{OnoRolenSchneider}, where sums over
integer partitions lead to identities involving the Riemann zeta
function and multiple zeta values.  The constructions considered here
are different in origin: our grading is determined by partition size
and length, rather than by the multiplicative partition norms used in
partition zeta functions.

The appearance of multiple zeta values in this setting is not new.
Espinosa formulated an Abel-regularized MZV decomposition of
$e^\gamma$, and Hucht subsequently organized its depth contributions
by a generating function involving
\[
        \frac{e^{1/q}}{1+1/q}-1.
\]
Our contribution is instead to retain the additional excess grading.
The product \eqref{eq:Hscr-intro} simultaneously records the
fixed-excess Young-lattice decomposition and the fixed-depth MZV
decomposition, while \eqref{eq:main-intro} provides the asymptotic
behavior of each fixed Young-lattice diagonal.  In particular, the
global identity $e^\gamma$ is used here as a consistency relation and
not claimed as a new result.

The remainder of the paper develops these constructions in order.
Section~2 derives the finite generating function and the fixed-diagonal
extraction, and Section~3 establishes the fixed-excess asymptotics.
Sections~4--6 develop the partition and depth refinements, while
Section~7 gives the regulated MZV identification.  We conclude with
computational checks and a discussion of the resulting
two-dimensional structure.

\section{From partition sums to fixed-excess diagonals}
\label{sec:finite}

We begin by deriving the finite generating function that will be used
throughout the paper.  The construction is elementary, but it is worth
making explicit how the partition structure enters.

For a partition
\[
        \mu=(\mu_1,\ldots,\mu_i)\vdash j,
\]
let $m_\mu(x_1,\ldots,x_n)$ denote the associated monomial symmetric
polynomial: the sum of all distinct monomials obtained by assigning
the positive exponents $\mu_1,\ldots,\mu_i$ to distinct variables
among $x_1,\ldots,x_n$.  We use the quantities
\[
 E_{i,j}(n)
 =
 \sum_{\substack{\mu\vdash j\\ \ell(\mu)=i}}
 \frac{
 m_\mu(1,1/2,\ldots,1/n)
 }{\mu_1!\cdots\mu_i!},
\]
following the notation of Segovia \cite{Segovia2026}.  We also set
$E_{0,0}(n)=1$.

The two indices have a simple interpretation.  The index $j$ is the
size of the partition, while $i$ is its number of parts.  Hence
\[
        j-i=\sum_{k=1}^{i}(\mu_k-1)
\]
measures the excess of the partition over the all-unit partition
$(1^i)$.  Fixing $j-i=r-1$ is precisely the fixed-excess condition
that defines the $r$th diagonal considered in the Introduction.
The terminology ``diagonal'' is most transparent after passing from
individual partition shapes to their size and length.  Figure~\ref{fig:diagonals}
shows the same fixed-excess families both in the Young lattice and under
the projection
\[
        \mu\longmapsto\bigl(\ell(\mu),|\mu|\bigr).
\]
In the latter representation the condition
$|\mu|-\ell(\mu)=r-1$ becomes a literal diagonal.

\begin{figure}[htbp] %  figure placement: here, top, bottom, or page
   \centering
   \includegraphics[width=6.3in]{FigDiagonals.pdf} 
   \caption{\textbf{Fixed-excess diagonals in the Young lattice and their
shape-family decomposition.}
(a) Partitions are shown in the Young lattice, graded vertically by their
size $|\mu|$. For fixed excess
$r-1=|\mu|-\ell(\mu)$, the corresponding ``diagonal'' is not a single
path in the Young lattice, but a union of rays indexed by partitions
$\nu\vdash r-1$. Each ray begins at
$(\nu_1+1,\ldots,\nu_d+1)$ and continues by appending unit parts.
(b) Under the projection
$\mu\mapsto(\ell(\mu),|\mu|)$, partitions having the same size and
number of parts occupy the same cell, and each fixed-excess family
becomes a literal diagonal $|\mu|-\ell(\mu)=r-1$.
(c) Resolving each cell according to its Young shape separates these
diagonals into the individual rays indexed by
$\nu\vdash r-1$; along the ray associated with
$\nu=(\nu_1,\ldots,\nu_d)$, the partitions are
$(\nu_1+1,\ldots,\nu_d+1,1^m)$, $m\geq0$.
Colors identify the same fixed-excess classes throughout the three
panels.}
\label{fig:diagonals}
\end{figure}

This representation also explains the coefficient extraction used below.
Indeed, if $i=\ell(\mu)$ and $j=|\mu|$, then the $r$th diagonal is
characterized by $j-i=r-1$.

To see how all these quantities can be generated simultaneously,
consider one of the variables in the specialization
\[
        (1,1/2,\ldots,1/n),
\]
say $1/q$.  If this variable is not used in a monomial, it contributes
$1$.  If it is used with a positive exponent $m$, its contribution,
including the factorial appearing in the definition of $E_{i,j}(n)$,
is
\[
        v\,\frac{u^m}{m!\,q^m}.
\]
Here $u$ records the size contributed by the exponent and $v$ records
the fact that one part has been selected.  Summing over all possible
positive exponents gives
\[
        v\sum_{m\geq1}\frac{u^m}{m!\,q^m}
        =
        v(e^{u/q}-1).
\]
Thus the contribution associated with the index $q$ is
\[
        1+v(e^{u/q}-1).
\]
Since the indices $q=1,\ldots,n$ are selected independently, this
immediately suggests the product
\[
        F_n(u,v)
        =
        \prod_{q=1}^n
        \left(1+v(e^{u/q}-1)\right).
\]

The following proposition makes this observation precise.

\begin{proposition}[Finite master generating function]
\label{prop:F}
For every $n\geq1$,
\begin{equation}\label{eq:Fmaster}
 F_n(u,v)
 =
 \prod_{q=1}^n
 \left(1+v(e^{u/q}-1)\right)
 =
 \sum_{i,j\geq0}E_{i,j}(n)v^iu^j.
\end{equation}
In particular,
\[
        [u^jv^i]F_n(u,v)=E_{i,j}(n).
\]
\end{proposition}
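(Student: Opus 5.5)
The plan is to expand the finite product directly and match terms with the definition of $E_{i,j}(n)$. Since each factor is a formal power series in $u$ with coefficients polynomial in $v$, we can work in the ring $\mathbb{Q}[v][[u]]$. We could equally regard everything as an entire function of $(u,v)$, because the product is finite. First, write each factor as
\[
1+v(e^{u/q}-1)=1+\sum_{m\geq1}v\,\frac{u^m}{m!\,q^m}.
\]
Expanding the product over $q=1,\ldots,n$ then amounts to choosing, for each $q$, either the term $1$ or a single term $v u^{m_q}/(m_q!\,q^{m_q})$ with $m_q\geq1$. The product becomes a sum over all functions $\alpha:\{1,\ldots,n\}\to\mathbb{Z}_{\geq0}$, where $\alpha(q)=0$ encodes the choice of $1$. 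The term indexed by $\alpha$ is
\[
v^{\#\{q:\alpha(q)>0\}}\,u^{\sum_q\alpha(q)}\prod_{q:\alpha(q)>0}\frac{(1/q)^{\alpha(q)}}{\alpha(q)!}.
\]
Rearranging this sum is legitimate because, for fixed $u$-degree $j$, only finitely many $\alpha$ contribute.

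Next, we group the exponent vectors $\alpha$ by the partition $\mu$ obtained by sorting the positive values of $\alpha$ in decreasing order. Such a $\mu$ has $\ell(\mu)=i=\#\{q:\alpha(q)>0\}$ and $|\mu|=j=\sum_q\alpha(q)$. The factorial weight $\prod_{q}\alpha(q)!$ over the support equals $\mu_1!\cdots\mu_i!$, since it depends only on the multiset of positive values. The monomial $\prod_q (1/q)^{\alpha(q)}$ is exactly $x^\alpha$ evaluated at $x=(1,1/2,\ldots,1/n)$.

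The remaining point, and the only one needing care, is the bijection between exponent vectors $\alpha$ with sorted positive part $\mu$ and the distinct monomials making up $m_\mu$. By definition, $m_\mu(x_1,\ldots,x_n)=\sum x^\alpha$, where $\alpha$ ranges over the \emph{distinct} rearrangements of $(\mu_1,\ldots,\mu_i,0,\ldots,0)$. These rearrangements are precisely the $\alpha\in\mathbb{Z}_{\geq0}^n$ whose multiset of nonzero entries is $\mu$. Each such $\alpha$ arises exactly once in the product expansion, because each $\alpha$ corresponds to a unique choice of term from each factor. Hence there is no overcounting from repeated parts, and this is why $m_\mu$ rather than a sum over ordered assignments appears. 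If $i>n$, no such $\alpha$ exists, and correspondingly $m_\mu=0$, so the identity remains consistent.

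Collecting terms, the coefficient of $v^iu^j$ is
\[
\sum_{\substack{\mu\vdash j\\ \ell(\mu)=i}}\frac{m_\mu(1,1/2,\ldots,1/n)}{\mu_1!\cdots\mu_i!}=E_{i,j}(n).
\]
The case $\alpha\equiv0$ gives the constant term $1=E_{0,0}(n)$. For $i=0$ and $j>0$, and for $j<i$, the coefficient is empty, so it equals $0$, matching the empty sum in the definition. This establishes \eqref{eq:Fmaster} and the coefficient statement $[u^jv^i]F_n(u,v)=E_{i,j}(n)$.
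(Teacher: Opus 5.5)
Your proof is correct and follows the same route as the paper: you expand each factor as $1+v\sum_{m\geq1}u^m/(m!\,q^m)$, select one term from each factor, and group the exponent vectors by their sorted positive part. This reproduces exactly the distinct monomials of $m_\mu$, with the permutation-invariant weight $\mu_1!\cdots\mu_i!$. Your explicit bijection with exponent vectors $\alpha$, and your handling of the edge cases ($i>n$, $i=0$, $j<i$), simply spell out the grouping step that the paper states more briefly.
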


\begin{proof}
Expanding an individual factor gives
\[
 1+v(e^{u/q}-1)
 =
 1+v\sum_{m\geq1}\frac{u^m}{m!\,q^m}.
\]
To obtain a term containing $v^i$, we choose a nonconstant term from
exactly $i$ distinct indices
\[
        q_1,\ldots,q_i\in\{1,\ldots,n\}.
\]
If the corresponding positive exponents are
$m_1,\ldots,m_i$, the resulting term is
\[
 v^i u^{m_1+\cdots+m_i}
 \frac{1}
 {m_1!\cdots m_i!\,
  q_1^{m_1}\cdots q_i^{m_i}}.
\]
Fixing the total degree
\[
        m_1+\cdots+m_i=j
\]
and grouping the exponent vectors according to the partition
$\mu\vdash j$ obtained by sorting $(m_1,\ldots,m_i)$ produces exactly
the distinct monomials in
\[
        m_\mu(1,1/2,\ldots,1/n).
\]
The factorial denominator is unchanged under permutation of the
parts.  Summing over all partitions of $j$ having $i$ parts therefore
gives $E_{i,j}(n)$.
\end{proof}

It may be useful to see the grouping in one small case.  Consider the
coefficient of $u^4v^2$.  The partitions of $4$ with two parts are
\[
        (3,1)\qquad\text{and}\qquad(2,2),
\]
and Proposition~\ref{prop:F} gives
\[
 [u^4v^2]F_n(u,v)
 =
 \frac{m_{(3,1)}(1,1/2,\ldots,1/n)}{3!}
 +
 \frac{m_{(2,2)}(1,1/2,\ldots,1/n)}{2!\,2!}.
\]
The first term collects choices of two distinct indices carrying
exponents $3$ and $1$, in either order; the second collects choices
in which both selected indices carry exponent $2$.  Thus the
symmetric-polynomial notation is exactly what remains after the
individual assignments of exponents to distinct indices have been
grouped by partition shape.

\subsection{Extracting a fixed diagonal}

The master product~\eqref{eq:Fmaster} keeps track separately of the
size $j$ and the number of parts $i$.  To isolate a fixed excess
$j-i=r-1$, substitute
\[
        v=\frac{z}{u}.
\]
Then
\[
        F_n(u,z/u)
        =
        \sum_{i,j\geq0}E_{i,j}(n)z^iu^{j-i}.
\]
Thus the exponent of $u$ records the excess, while $z$ continues to
record the number of parts.

\begin{definition}
For $r\geq1$, define
\[
 D_r(n;z)
 :=
 \sum_{i\geq0}E_{i,i+r-1}(n)z^i.
\]
For $r\geq2$ the sum necessarily begins at $i=1$.  When $r=1$ our
convention $E_{0,0}(n)=1$ contributes the additional constant term
$1$.
\end{definition}

\begin{proposition}[Complete diagonal extraction]
\label{prop:diagonal}
For every $r\geq1$,
\begin{equation}\label{eq:Dr}
 D_r(n;z)
 =
 [u^{r-1}]
 \prod_{q=1}^n
 \left(
 1+z\frac{e^{u/q}-1}{u}
 \right).
\end{equation}
\end{proposition}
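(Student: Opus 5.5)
The plan is to derive \eqref{eq:Dr} directly from Proposition~\ref{prop:F} by the substitution $v=z/u$. The only point needing care is that this substitution is legitimate at the level of formal power series. A naive substitution would create negative powers of $u$, but that cannot happen here. Each factor satisfies
\[
 1+v(e^{u/q}-1)\Big|_{v=z/u}
 =1+z\,\frac{e^{u/q}-1}{u}
 =1+z\sum_{m\geq1}\frac{u^{m-1}}{m!\,q^m},
\]
which is a genuine power series in $u$ with polynomial coefficients in $z$. So the right-hand side of \eqref{eq:Dr} is a well-defined element of $\mathbb{Q}[z][[u]]$, being a finite product of such series.

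Next I compare coefficients. Proposition~\ref{prop:F} gives $F_n(u,v)=\sum_{i,j}E_{i,j}(n)v^iu^j$. In this sum $E_{i,j}(n)=0$ unless $j\geq i$, since every part of $\mu$ is at least $1$; the case $i=j=0$ is covered by the convention $E_{0,0}(n)=1$. Hence every monomial $v^iu^j$ that actually occurs is divisible by $(vu)^i$.

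It is therefore cleanest to rewrite the identity in the variables $w=uv$ and $u$. Factor by factor, $1+v(e^{u/q}-1)=1+w\,\frac{e^{u/q}-1}{u}$, and this is an identity in $\mathbb{Q}[w][[u]]$. Taking the product over $q$, Proposition~\ref{prop:F} becomes
\[
 \prod_{q=1}^n\Bigl(1+w\frac{e^{u/q}-1}{u}\Bigr)=\sum_{i\geq0}\sum_{j\geq i}E_{i,j}(n)\,w^iu^{j-i}.
\]
Alternatively, one can rerun the expansion in the proof of Proposition~\ref{prop:F} with the per-index weight $z\,u^{m-1}/(m!\,q^m)$; this yields the same identity directly.

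Renaming $w$ as $z$ and extracting $[u^{r-1}]$ selects exactly the pairs with $j-i=r-1$. This gives $\sum_i E_{i,i+r-1}(n)z^i=D_r(n;z)$. For $r=1$ the $i=0$ term supplies the constant $1$, matching the convention in the Definition.

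The only step needing real attention is this formal justification of the change of variables. I will handle it as described: show that the monomials $v^iu^j$ that occur all have $j\geq i$, so the map $v^iu^j\mapsto z^iu^{j-i}$ is a well-defined ring homomorphism on the subring in which both sides of \eqref{eq:Fmaster} live. Everything else is bookkeeping.
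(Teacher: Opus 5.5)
Your proposal is correct and follows the paper's proof: substitute $v=z/u$ into Proposition~\ref{prop:F} and extract $[u^{r-1}]$, which forces $j-i=r-1$. Your extra observation that only monomials with $j\geq i$ occur (equivalently, working in $w=uv$) supplies the formal justification of the substitution that the paper leaves implicit.
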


\begin{proof}
By Proposition~\ref{prop:F},
\[
 F_n(u,z/u)
 =
 \sum_{i,j\geq0}E_{i,j}(n)z^iu^{j-i}.
\]
Extracting the coefficient of $u^{r-1}$ imposes
$j-i=r-1$, and hence
\[
 [u^{r-1}]F_n(u,z/u)
 =
 \sum_{i\geq0}E_{i,i+r-1}(n)z^i
 =
 D_r(n;z).
\]
\end{proof}

There are two useful pieces of information in $D_r(n;z)$.  Setting
$z=1$ gives the total unweighted mass of the $r$th diagonal,
\[
        D_r(n;1)
        =
        \sum_iE_{i,i+r-1}(n),
\]
whereas retaining $z$ records how this mass is distributed according
to the number of parts $i$.  This second role of $z$ will become
important in Section~\ref{sec:asymptotics}: after normalization,
$D_r(n;z)$ will be the probability generating function of the number
of parts on a fixed diagonal.

Proposition~\ref{prop:diagonal} also makes explicit the
Young-shape parametrization described in the Introduction.  If
\[
        \mu=(\mu_1,\ldots,\mu_i)
\]
occurs on the $r$th diagonal, then
\[
        \sum_{k=1}^i(\mu_k-1)=r-1.
\]
Deleting the zero terms after subtracting $1$ from every part gives
a partition
\[
        \nu\vdash r-1.
\]
Conversely, if
\[
        \nu=(\nu_1,\ldots,\nu_d)\vdash r-1,
\]
then every partition in the corresponding family has the form
\[
        (\nu_1+1,\ldots,\nu_d+1,1^m),
        \qquad m\geq0.
\]
We record this correspondence for later use.

\begin{corollary}[Partition indexing of a diagonal]
\label{cor:shape}
Fix $r\geq2$.  The Young-shape families occurring on the diagonal
$j-i=r-1$ are in one-to-one correspondence with the partitions
$\nu\vdash r-1$.  Under this correspondence,
\[
        \nu=(\nu_1,\ldots,\nu_d)
        \quad\longleftrightarrow\quad
        (\nu_1+1,\ldots,\nu_d+1,1^m),
        \qquad m\geq0.
\]
\end{corollary}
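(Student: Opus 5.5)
The plan is to exhibit an explicit bijection and then check it on both sides.

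Fix $r\geq2$. A partition $\mu=(\mu_1,\ldots,\mu_i)$ lies on the diagonal $j-i=r-1$ exactly when $\sum_k(\mu_k-1)=r-1$, as recorded before Proposition~\ref{prop:diagonal}. I would define a map $\Phi$ on such $\mu$ by subtracting $1$ from every part and deleting the resulting zeros. Since the parts of $\mu$ are weakly decreasing and at least $1$, the numbers $\mu_k-1$ are weakly decreasing and nonnegative. The nonzero ones therefore form a weakly decreasing sequence $\nu=(\nu_1,\ldots,\nu_d)$ with sum $r-1$, so $\nu\vdash r-1$. Here $d$ is the number of parts of $\mu$ exceeding $1$, and $d\geq1$ because $r-1\geq1$.

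Next I would identify the fibres of $\Phi$, which is what the statement calls the Young-shape families. Write $m=i-d$, the number of unit parts of $\mu$. Then $\mu$ is recovered from $\nu$ as $(\nu_1+1,\ldots,\nu_d+1,1^m)$. This uses that the parts exceeding $1$ precede the unit parts in a weakly decreasing sequence, and that adding $1$ to each part of $\nu$ preserves the ordering. Conversely, for any $\nu\vdash r-1$ and any $m\geq0$, the sequence $(\nu_1+1,\ldots,\nu_d+1,1^m)$ is weakly decreasing, because $\nu_d+1\geq2>1$. Its excess is $\sum_k\nu_k=r-1$, so it lies on the $r$th diagonal, and $\Phi$ sends it to $\nu$.

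It follows that the fibre $\Phi^{-1}(\nu)$ is exactly the ray $\{(\nu_1+1,\ldots,\nu_d+1,1^m):m\geq0\}$. Every $\nu\vdash r-1$ has a nonempty fibre, so $\Phi$ is surjective. Distinct $\nu$ give disjoint rays, since $\nu$ is recovered from any member of its ray. Hence the families on the diagonal correspond bijectively to the partitions $\nu\vdash r-1$, via the stated formula.

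None of these steps is a real obstacle. The only point needing care is to make precise that a ``Young-shape family'' means the set of shapes sharing the same non-unit part structure. With that reading, the families are exactly the fibres of $\Phi$, and the corollary follows.
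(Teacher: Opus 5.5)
Your proposal is correct and follows the paper's own argument: the paper likewise obtains $\nu\vdash r-1$ by subtracting $1$ from every part of $\mu$ and deleting the zeros, and conversely identifies the family attached to $\nu$ as $(\nu_1+1,\ldots,\nu_d+1,1^m)$, $m\geq0$. You additionally make explicit the checks the paper leaves implicit: weak monotonicity, $d\geq1$, and the fact that the families are exactly the fibres of this map, which are pairwise disjoint and nonempty.
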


Thus all the Young-shape families on a fixed diagonal are already
present in a single coefficient extraction.  This allows us to study
their total asymptotic mass without calculating the individual
families separately.

\section{Fixed-excess asymptotics}
\label{sec:asymptotics}

For fixed $r$, the quantity
\[
        D_r(n;1)=\sum_i E_{i,i+r-1}(n)
\]
is the total mass carried by the $r$th diagonal.  The finite product
from Section~2 separates naturally into a classical gamma-function
factor, which carries the leading dependence on $n$, and a convergent
infinite product containing the fixed-excess constants.  The marker
$z$, meanwhile, records the number of parts and allows us to locate
the mass of the diagonal around $i\asymp\log n$.  These two features
will account for the scale $n\log\log n$ in the asymptotics of
$A_r(n)$.

\subsection{Separating the dominant factor}

Recall from Proposition~\ref{prop:diagonal} that
\[
 D_r(n;z)
 =
 [u^{r-1}]
 \prod_{q=1}^n
 \left(
 1+z\frac{e^{u/q}-1}{u}
 \right).
\]
It is useful to write
\[
        \phi_q(u):=\frac{e^{u/q}-1}{u},
\]
where the apparent singularity at $u=0$ is removable and
$\phi_q(0)=1/q$.  Thus
\[
 D_r(n;z)
 =
 [u^{r-1}]
 \prod_{q=1}^n(1+z\phi_q(u)).
\]

For large $q$,
\[
        \phi_q(u)
        =
        \frac1q+\frac{u}{2q^2}+O(q^{-3}),
\]
uniformly for $u$ in compact sets.  The term $1/q$ is responsible for
the leading growth of the product.  We therefore factor it out:
\begin{equation}\label{eq:factor}
 \prod_{q=1}^n(1+z\phi_q(u))
 =
 \prod_{q=1}^n\left(1+\frac zq\right)
 R_n(u,z),
\end{equation}
where
\[
 R_n(u,z)
 :=
 \prod_{q=1}^n
 \frac{1+z\phi_q(u)}{1+z/q}.
\]
The first factor is explicit:
\begin{equation}\label{eq:gamma}
 \prod_{q=1}^n\left(1+\frac zq\right)
 =
 \frac{\Gamma(n+1+z)}
 {\Gamma(1+z)\Gamma(n+1)}.
\end{equation}
The second factor has a finite limit as $n\to\infty$.

\begin{lemma}[Normal convergence]\label{lem:normal}
There exist neighborhoods $U$ of $u=0$ and $V$ of $z=1$ such that
$R_n(u,z)$ converges locally uniformly on $U\times V$ to
\[
 R(u,z)
 =
 \prod_{q=1}^\infty
 \frac{1+z\phi_q(u)}{1+z/q}.
\]
The convergence is normal.  In particular, coefficient extraction
in $u$ and differentiation finitely many times with respect to $z$
commute with the limit.
\end{lemma}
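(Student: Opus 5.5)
The plan is to write each factor as $1+w_q(u,z)$ with
\[
 w_q(u,z):=\frac{1+z\phi_q(u)}{1+z/q}-1=\frac{z\bigl(\phi_q(u)-1/q\bigr)}{1+z/q},
\]
and to apply the standard Weierstrass criterion: if $\sum_q\sup_{U\times V}|w_q|<\infty$ and no factor vanishes, then the partial products converge normally to a holomorphic limit. Coefficient extraction and finitely many $z$-derivatives then commute with the limit by Cauchy's integral formula on a small polydisc inside $U\times V$.

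\textbf{Choice of neighborhoods.} Take $U=\{|u|<1\}$ and $V=\{|z-1|<1/2\}$. On $V$ we have $\operatorname{Re} z>1/2$, so $|1+z/q|\ge 1$ for every $q\ge1$. The denominators are therefore uniformly bounded away from $0$, and each factor is holomorphic on $U\times V$; the removable singularity of $\phi_q$ at $u=0$ is harmless.

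\textbf{Uniform tail bound.} From the power series,
\[
 \phi_q(u)-\frac1q=\sum_{m\ge2}\frac{u^{m-1}}{m!\,q^m},
\]
so for $|u|\le1$ we get $|\phi_q(u)-1/q|\le q^{-2}\sum_{m\ge2}1/m!\le q^{-2}$. Hence $\sup_{U\times V}|w_q|\le \tfrac32\,q^{-2}$, which is summable. This gives normal convergence of $\sum_q w_q$, and hence of the product $\prod_q(1+w_q)$, uniformly on $U\times V$.

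\textbf{Nonvanishing and passage to the limit.} For the standard theorem we want each factor $1+w_q$ to be nonzero. Where $|w_q|<1/2$ this is automatic, and we may take $\log(1+w_q)$ with $|\log(1+w_q)|\le 2|w_q|$. The sum of logarithms then converges normally, and $R=\exp\bigl(\sum\log(1+w_q)\bigr)$ is holomorphic.

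This requires $\tfrac32 q^{-2}<\tfrac12$, which holds for $q\ge2$. For $q=1$ the bound fails, so the first factor must be handled separately. We only need $1+w_1=(1+z\phi_1(u))/(1+z)$ to be nonzero near $(0,1)$; it equals $1$ at $(0,1)$, so shrinking $U\times V$ by continuity suffices. Finite products of nonvanishing holomorphic factors cause no trouble.

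\textbf{Commuting with extraction and differentiation.} Since $R_n\to R$ uniformly on $U\times V$, Cauchy's estimates on a closed polydisc $|u|\le\rho$, $|z-1|\le\rho'$ inside it give uniform convergence of all mixed derivatives $\partial_u^a\partial_z^b R_n\to\partial_u^a\partial_z^bR$ on a smaller polydisc. In particular $[u^{r-1}]\partial_z^bR_n(u,z)\to[u^{r-1}]\partial_z^bR(u,z)$ uniformly for $z$ near $1$.

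\textbf{Main obstacle.} There is no real difficulty here. The only care needed is the uniform lower bound on the denominators $1+z/q$ and on the first factor, which is what fixes the choice of $V$.
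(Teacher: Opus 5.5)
Your proposal is correct and follows essentially the same route as the paper: a uniform $O(q^{-2})$ bound on the deviations $w_q$ from $1$, the standard convergence theorem for infinite products, and Cauchy's integral formula for coefficient extraction and $z$-derivatives, with your version making the neighborhoods and constants explicit. The separate treatment of the $q=1$ factor is harmless but unnecessary, since uniform summability of $\sum_q|w_q|$ already gives uniform convergence of the partial products with no nonvanishing hypothesis (finitely many initial factors can be split off as a bounded holomorphic multiplier).
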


\begin{proof}
Uniformly for $u$ in a compact disk,
\[
 \phi_q(u)
 =
 \frac1q+\frac{u}{2q^2}+O(q^{-3}).
\]
For $z$ in a compact neighborhood of $1$ avoiding the poles
$z=-q$, it follows that
\[
 \frac{1+z\phi_q(u)}{1+z/q}
 =
 1+O(q^{-2}),
\]
uniformly on compact subsets of $U\times V$.  Hence the series of
deviations from $1$,
\[
 \sum_{q\geq1}
 \left|
 \frac{1+z\phi_q(u)}{1+z/q}-1
 \right|,
\]
converges uniformly on compact subsets of $U\times V$.  The standard
convergence theorem for infinite products therefore gives locally
uniform convergence of $R_n(u,z)$ to a holomorphic limit $R(u,z)$.
Cauchy's integral formula then implies that coefficient extraction
in $u$ and differentiation finitely many times with respect to $z$
commute with the limit.
\end{proof}

At $z=1$, the limiting product takes a particularly simple form.
We write
\begin{equation}\label{eq:Hcal}
 \mathcal H(u)
 :=
 R(u,1)
 =
 \prod_{q=1}^\infty
 \frac{q}{q+1}
 \left(
 1+\frac{e^{u/q}-1}{u}
 \right),
\end{equation}
and define
\begin{equation}\label{eq:Crdef}
        C_r:=[u^{r-1}]\mathcal H(u).
\end{equation}
These constants will be the central objects in the remainder of the
paper.  For the moment, they arise simply as the limiting
coefficients left after the universal gamma factor has been removed.

More generally, for $z$ near $1$, set
\[
 K_r(z)
 :=
 \frac{1}{\Gamma(1+z)}
 [u^{r-1}]R(u,z).
\]
Notice that $C_r>0$ for every $r\geq1$.  Hence
$K_r(1)=C_r\neq0$, and after shrinking the neighborhood of
$z=1$ if necessary, $K_r(z)$ has no zeros there.  We may
therefore use a holomorphic branch of $\log K_r(z)$ in the
moment calculation below.

\begin{theorem}[Asymptotic mass of a fixed diagonal]
\label{thm:diagonalmass}
For each fixed $r\geq1$,
\begin{equation}\label{eq:Drasymp}
        D_r(n;z)
        =
        n^z\bigl(K_r(z)+o(1)\bigr)
\end{equation}
locally uniformly for $z$ in a neighborhood of $1$.  The asymptotic
remains valid after any fixed number of differentiations with respect
to $z$.  In particular,
\begin{equation}\label{eq:Drmass}
        D_r(n;1)\sim C_r n.
\end{equation}
\end{theorem}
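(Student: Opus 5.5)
The plan is to combine the factorization \eqref{eq:factor} with the gamma identity \eqref{eq:gamma} and Lemma~\ref{lem:normal}. The gamma factor does not depend on $u$, so extracting $[u^{r-1}]$ from \eqref{eq:factor} gives the exact identity
\[
 D_r(n;z)=\frac{\Gamma(n+1+z)}{\Gamma(1+z)\Gamma(n+1)}\,[u^{r-1}]R_n(u,z).
\]
The two factors will then be handled separately, and the estimates recombined.

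\textbf{The gamma ratio.} I will use the standard uniform estimate
\[
 \frac{\Gamma(n+1+z)}{\Gamma(n+1)}=n^z\bigl(1+O(1/n)\bigr),
\]
valid uniformly for $z$ in a compact neighborhood of $1$ and coming from Stirling's formula. Since the left side and $n^z$ are holomorphic in $z$, the function
\[
 g_n(z):=\frac{\Gamma(n+1+z)}{n^z\Gamma(n+1)}
\]
tends to $1$ uniformly on a disk. By Cauchy's estimates, every fixed $z$-derivative of $g_n$ then tends to the corresponding derivative of $1$, uniformly on a slightly smaller disk.

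\textbf{The coefficient factor.} By Lemma~\ref{lem:normal}, $R_n\to R$ uniformly on compact subsets of $U\times V$. Cauchy's formula in $u$ on a small circle then gives
\[
 [u^{r-1}]R_n(u,z)\to[u^{r-1}]R(u,z)
\]
uniformly for $z$ near $1$. Since these are holomorphic functions of $z$, the convergence again persists under any fixed number of $z$-derivatives. Dividing by the nonvanishing holomorphic factor $\Gamma(1+z)$ gives
\[
 D_r(n;z)=n^z g_n(z)h_n(z),\qquad h_n(z)\to K_r(z)
\]
locally uniformly, with all derivatives of $h_n$ converging as well.

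\textbf{Recombination and derivatives.} Setting $f_n:=g_nh_n$, we get $f_n\to K_r$ locally uniformly, which is \eqref{eq:Drasymp}. For derivatives, the statement to prove is that $\partial_z^k D_r(n;z)=\partial_z^k\bigl(n^zK_r(z)\bigr)+o\bigl(\text{the natural scale}\bigr)$. The derivative $\partial_z^k(n^zf_n)$ expands by Leibniz into terms $(\log n)^j n^z f_n^{(k-j)}$. Each $f_n^{(k-j)}$ converges to $K_r^{(k-j)}$, so
\[
 \partial_z^k D_r=\partial_z^k\bigl(n^zK_r\bigr)+o\bigl(n^{\operatorname{Re}z}(\log n)^k\bigr).
\]
I will phrase the derivative claim in exactly this form, which is what the later moment computation of $A_r(n)$ needs; one can equivalently work with $\log D_r=z\log n+\log f_n$ using the holomorphic branch of $\log K_r$. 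Finally, at $z=1$ we have $K_r(1)=[u^{r-1}]R(u,1)/\Gamma(2)=C_r$, which gives \eqref{eq:Drmass}.

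\textbf{Main obstacle.} The only delicate point is uniformity near $z=1$. The neighborhood $V$ has to avoid the poles $z=-q$ of $1/(1+z/q)$, and Stirling's estimate has to be uniform in complex $z$. Both are handled by choosing a small closed disk around $1$, on which $\operatorname{Re}(n+1+z)\ge n$ keeps the gamma ratio estimate uniform. After that, the Cauchy-estimate bootstrap from uniform convergence to convergence of derivatives is routine.
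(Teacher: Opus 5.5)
Your proposal is correct and follows essentially the same route as the paper. Both arguments use the exact identity from \eqref{eq:factor} and \eqref{eq:gamma}, a uniform Stirling estimate for the gamma ratio, Lemma~\ref{lem:normal} with Cauchy's formula for the coefficient factor, and Cauchy estimates to pass to $z$-derivatives. Your Leibniz formulation, equivalently $f_n^{(k)}\to K_r^{(k)}$, just states explicitly what the paper means by differentiating the asymptotic, and the final ``$\sim$'' relies on $C_r>0$, which the paper records just before the theorem.
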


\begin{proof}
Combining \eqref{eq:factor} and \eqref{eq:gamma} gives
\[
 D_r(n;z)
 =
 \frac{\Gamma(n+1+z)}
 {\Gamma(1+z)\Gamma(n+1)}
 [u^{r-1}]R_n(u,z).
\]
By Lemma~\ref{lem:normal},
\[
 [u^{r-1}]R_n(u,z)
 =
 [u^{r-1}]R(u,z)+o(1)
\]
locally uniformly near $z=1$.  On compact subsets of this
neighborhood, the gamma-ratio asymptotic
\[
 \frac{\Gamma(n+1+z)}{\Gamma(n+1)}
 =
 n^z\bigl(1+O(n^{-1})\bigr)
\]
is uniform.  Substitution yields \eqref{eq:Drasymp}.
The convergence above is locally uniform in a complex neighborhood
of $z=1$, and all functions involved are holomorphic there.
Cauchy's integral formula therefore implies local uniform convergence
of every fixed $z$-derivative on a slightly smaller neighborhood.
Thus the asymptotic in \eqref{eq:Drasymp} may be differentiated any fixed
number of times with respect to $z$. In particular, at
$z=1$ we have $\Gamma(2)=1$ and
\[
        K_r(1)=C_r,
\]
which proves \eqref{eq:Drmass}.
\end{proof}

Theorem~\ref{thm:diagonalmass} already gives one half of the
asymptotic behavior of $A_r(n)$.  For every fixed excess, the total
mass of the corresponding diagonal grows linearly with $n$; the
dependence on $r$ is contained entirely in the coefficient $C_r$.
What remains is to understand the typical size of the index $i$ that
carries this mass.

\subsection{Where is the mass on a fixed diagonal?}

The variable $z$ in $D_r(n;z)$ was introduced in Section~\ref{sec:finite}
to mark the number of parts.  Since the coefficients
$E_{i,i+r-1}(n)$ are nonnegative, normalizing them gives a probability
distribution.  For fixed $r\geq1$, define a random variable
$I_{n,r}$ by
\[
 \mathbb P(I_{n,r}=i)
 =
 \frac{E_{i,i+r-1}(n)}
 {D_r(n;1)}.
\]
Its probability generating function is therefore
\begin{equation}\label{eq:Pnr}
 P_{n,r}(z)
 :=
 \mathbb E[z^{I_{n,r}}]
 =
 \frac{D_r(n;z)}{D_r(n;1)}.
\end{equation}

Theorem~\ref{thm:diagonalmass} now has a probabilistic interpretation:
near $z=1$,
\[
 P_{n,r}(z)
 \sim
 n^{z-1}\frac{K_r(z)}{K_r(1)}.
\]
The factor $n^{z-1}$ is the probability generating function of a
Poisson random variable with mean $\log n$.  We only need the first
two moments here, but this observation explains why the number of
parts is concentrated around $\log n$.

\begin{lemma}[Concentration of the number of parts]
\label{lem:concentration}
For every fixed $r\geq1$,
\[
 \mathbb E[I_{n,r}]
 =
 \log n+O(1),
 \qquad
 \operatorname{Var}(I_{n,r})
 =
 \log n+O(1).
\]
Consequently,
\[
        \frac{I_{n,r}}{\log n}
        \longrightarrow1
\]
in $L^2$ and hence in probability.
\end{lemma}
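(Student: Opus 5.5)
The plan is to read the first two moments of $I_{n,r}$ off the probability generating function $P_{n,r}(z)=D_r(n;z)/D_r(n;1)$. We use the factorial-moment identities
\[
 \mathbb E[I_{n,r}]=P_{n,r}'(1),\qquad
 \operatorname{Var}(I_{n,r})=P_{n,r}''(1)+P_{n,r}'(1)-P_{n,r}'(1)^2 .
\]
It is cleanest to pass to the logarithm. Put $L_n(z)=\log D_r(n;z)$ on a small complex neighborhood of $z=1$. This is legitimate because, by Theorem~\ref{thm:diagonalmass}, $D_r(n;z)=n^z(K_r(z)+o(1))$ with $K_r$ nonvanishing there, so $D_r(n;z)\neq0$ for $n$ large. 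In terms of $L_n$,
\[
 \mathbb E[I_{n,r}]=L_n'(1),\qquad \operatorname{Var}(I_{n,r})=L_n''(1)+L_n'(1).
\]

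The key step is to write
\[
 L_n(z)=z\log n+\log\bigl(K_r(z)+\varepsilon_n(z)\bigr),
\]
where $\varepsilon_n\to0$ locally uniformly near $z=1$ (Theorem~\ref{thm:diagonalmass}). Since $K_r$ is bounded away from $0$ on a small closed disk around $1$, the functions $\log(K_r+\varepsilon_n)$ are holomorphic and converge uniformly on that disk to $\log K_r$. Cauchy's estimates, or equivalently the derivative statement in Theorem~\ref{thm:diagonalmass}, then show that their first and second derivatives at $z=1$ converge to $(\log K_r)'(1)$ and $(\log K_r)''(1)$. In particular these derivatives are $O(1)$. This gives
\[
 L_n'(1)=\log n+O(1),\qquad L_n''(1)=O(1),
\]
hence $\mathbb E[I_{n,r}]=\log n+O(1)$ and $\operatorname{Var}(I_{n,r})=\log n+O(1)$. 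The step needing the most care is justifying the logarithm and its derivatives uniformly in $n$, and the nonvanishing of $K_r(1)=C_r$ recorded before the theorem handles this. An alternative that avoids logarithms is to differentiate $D_r(n;z)=n^z(K_r+\varepsilon_n)$ directly using the product rule. The $\log n$ and $\log^2 n$ terms then cancel in the variance, leaving $\log n$ plus bounded terms.

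For the convergence statement, compute
\[
 \mathbb E\Bigl[\Bigl(\frac{I_{n,r}}{\log n}-1\Bigr)^2\Bigr]
 =\frac{\operatorname{Var}(I_{n,r})+(\mathbb E[I_{n,r}]-\log n)^2}{\log^2 n}
 =\frac{\log n+O(1)}{\log^2 n}\to0 .
\]
This gives convergence in $L^2$, and Chebyshev's (Markov's) inequality upgrades it to convergence in probability.
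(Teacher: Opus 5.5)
Your proof is correct and follows essentially the same route as the paper. You take logarithms in the marked asymptotic $D_r(n;z)=n^z(K_r(z)+o(1))$ from Theorem~\ref{thm:diagonalmass}, read off the mean and variance as $(\log D_r)'(1)$ and $(\log D_r)''(1)+(\log D_r)'(1)$, and finish with the same $L^2$ computation. Your explicit justification of the logarithm (nonvanishing of $K_r+\varepsilon_n$ on a fixed disk, plus Cauchy estimates for its derivatives) fills in a step the paper leaves implicit.
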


\begin{proof}
From Theorem~\ref{thm:diagonalmass},
\[
 \log D_r(n;z)
 =
 z\log n+\log K_r(z)+o(1)
\]
locally uniformly near $z=1$.  Differentiating gives
\[
 \mathbb E[I_{n,r}]
 =
 \frac{D_r'(n;1)}{D_r(n;1)}
 =
 \log n+\frac{K_r'(1)}{K_r(1)}+o(1).
\]
For a probability generating function,
\[
 \operatorname{Var}(I_{n,r})
 =
 (\log D_r)''(1)+(\log D_r)'(1),
\]
and therefore
\[
        \operatorname{Var}(I_{n,r})
        =
        \log n+O(1).
\]
It follows that
\begin{align*}
\mathbb E\left[
\left(
\frac{I_{n,r}}{\log n}-1
\right)^2
\right]
&=
\frac{\operatorname{Var}(I_{n,r})}{(\log n)^2}
+
\left(
\frac{\mathbb E[I_{n,r}]}{\log n}-1
\right)^2\\
&=
O\left(\frac{1}{\log n}\right).
\end{align*}
Consequently,
\[
        \frac{I_{n,r}}{\log n}
        \longrightarrow 1
\]
in $L^2$, and hence also in probability.
\end{proof}

Thus a typical partition contributing to a fixed-excess diagonal has
approximately $\log n$ parts.  Since the weight in $A_r(n)$ is
$\log(i+r)$, we should therefore expect it to contribute
$\log\log n$.  The following elementary lemma makes this precise.

\begin{lemma}[The logarithmic weight]\label{lem:logweight}
For every fixed $r\geq1$,
\[
        \mathbb E[\log(I_{n,r}+r)]
        \sim\log\log n.
\]
\end{lemma}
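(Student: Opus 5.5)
We prove $\mathbb E[\log(I_{n,r}+r)]=\log\log n+o(\log\log n)$ by matching upper and lower bounds. Both rely only on Lemma~\ref{lem:concentration}, i.e.\ on the first two moments of $I_{n,r}$.

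For the upper bound we use concavity. Since $x\mapsto\log(x+r)$ is concave on $[0,\infty)$ and $I_{n,r}\geq0$, Jensen's inequality together with Lemma~\ref{lem:concentration} gives
\[
 \mathbb E[\log(I_{n,r}+r)]
 \leq
 \log\bigl(\mathbb E[I_{n,r}]+r\bigr)
 =
 \log\bigl(\log n+O(1)\bigr)
 =
 \log\log n+o(1).
\]

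For the lower bound we restrict to the event where $I_{n,r}$ is not too small. Fix $\varepsilon\in(0,1)$ and let $G_n=\{I_{n,r}\geq(1-\varepsilon)\log n\}$. On $G_n$ we have
\[
 \log(I_{n,r}+r)\geq\log\bigl((1-\varepsilon)\log n\bigr)=\log\log n+\log(1-\varepsilon).
\]
On the complement, $I_{n,r}+r\geq r\geq1$, so $\log(I_{n,r}+r)\geq0$. The integrand is therefore nonnegative everywhere, and discarding the complement loses nothing:
\[
 \mathbb E[\log(I_{n,r}+r)]
 \geq
 \mathbb P(G_n)\bigl(\log\log n+\log(1-\varepsilon)\bigr).
\]
Chebyshev's inequality with Lemma~\ref{lem:concentration} gives
\[
 \mathbb P(G_n^c)\leq\frac{\log n+O(1)}{(\varepsilon\log n+O(1))^2}=O\!\left(\frac{1}{\varepsilon^2\log n}\right)\to0.
\]
Hence
\[
 \liminf_{n\to\infty}\frac{\mathbb E[\log(I_{n,r}+r)]}{\log\log n}\geq1,
\]
and combining this with the upper bound gives the asymptotic. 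Since the lower bound holds for every fixed $\varepsilon$, the constant $\log(1-\varepsilon)$ is negligible after dividing by $\log\log n$.

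The only delicate point is that $\log(I_{n,r}+r)$ must be controlled on both tails. Concavity handles the upper tail through Jensen's inequality. The lower tail is harmless because $r\geq1$ makes the integrand nonnegative. If instead one wanted an error term $o(1)$ for the difference rather than a ratio statement, the lower tail would need a quantitative bound. In that case one would replace Chebyshev by the Poisson-type control coming from $n^{z-1}K_r(z)/K_r(1)$ for $z$ slightly below $1$. That sharper form is not needed here.
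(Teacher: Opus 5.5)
Your proof is correct and follows the paper's argument essentially step for step: Jensen's inequality for the upper bound, and restriction to the event $\{I_{n,r}\geq(1-\varepsilon)\log n\}$ for the lower bound. The differences are only cosmetic. You state explicitly that $\log(I_{n,r}+r)\geq0$, which the paper uses tacitly when discarding the complement, and you obtain the tail bound from Chebyshev rather than quoting convergence in probability from Lemma~\ref{lem:concentration}. Your closing remark overstates what is needed, however: that same $O(1/\log n)$ Chebyshev bound already gives an additive error $O(\log\log n/\log n)=o(1)$, so the Poisson-type refinement is unnecessary.
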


\begin{proof}
By Jensen's inequality,
\[
 \mathbb E[\log(I_{n,r}+r)]
 \leq
 \log(\mathbb E[I_{n,r}]+r)
 =
 \log\log n+o(1).
\]
For the reverse inequality, fix $0<\varepsilon<1$.  By
Lemma~\ref{lem:concentration},
\[
 \mathbb P\bigl(
 I_{n,r}\geq(1-\varepsilon)\log n
 \bigr)
 \longrightarrow1.
\]
Hence
\begin{align*}
 \mathbb E[\log(I_{n,r}+r)]
 &\geq
 \mathbb P\bigl(
 I_{n,r}\geq(1-\varepsilon)\log n
 \bigr)\\
 &\qquad{}\times
 \log\bigl((1-\varepsilon)\log n+r\bigr).
\end{align*}
After division by $\log\log n$, the right-hand side tends to $1$.
Together with the upper bound this proves the result.
\end{proof}

\begin{theorem}[Fixed-excess asymptotic]\label{thm:main}
For every fixed $r\geq1$,
\[
        A_r(n)\sim C_r\,n\log\log n.
\]
\end{theorem}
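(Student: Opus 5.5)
The plan is to write $A_r(n)$ as the diagonal mass times an expectation, and then combine Theorem~\ref{thm:diagonalmass} with Lemma~\ref{lem:logweight}.

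First I need to check that the sum defining $A_r(n)$ covers the whole support of $I_{n,r}$. A term $E_{i,i+r-1}(n)$ is nonzero only if $i\le n$, since a monomial $m_\mu$ in $n$ variables with $\ell(\mu)=i$ vanishes when $i>n$. For $r\ge2$ it also requires $i\ge1$. The sum $\sum_{i=1}^n$ in $A_r(n)$ therefore runs over the full support of $I_{n,r}$, except for $r=1$. In that case the term $i=0$, with $E_{0,0}(n)=1$, belongs to $D_1(n;1)$ but not to $A_1(n)$. It contributes $\log(1)\cdot 1=0$ to the weighted sum, so nothing changes.

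Next comes the factorization. With this bookkeeping,
\[
A_r(n)=\sum_{i}\log(i+r)\,E_{i,i+r-1}(n)=D_r(n;1)\,\mathbb E\bigl[\log(I_{n,r}+r)\bigr],
\]
which follows directly from the definition of the law of $I_{n,r}$.

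Then I take the asymptotics of each factor. Theorem~\ref{thm:diagonalmass} gives $D_r(n;1)\sim C_r n$ with $C_r>0$, and Lemma~\ref{lem:logweight} gives $\mathbb E[\log(I_{n,r}+r)]\sim\log\log n$. Multiplying the two relations proves the claim.

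There is no real obstacle here. The only points that need care are the $r=1$ constant term and the implicit restriction $i\le n$, and both are handled by the observations in the first step.
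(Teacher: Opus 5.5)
Your proposal is correct and is essentially the paper's proof: the factorization $A_r(n)=D_r(n;1)\,\mathbb E[\log(I_{n,r}+r)]$, followed by Theorem~\ref{thm:diagonalmass} and Lemma~\ref{lem:logweight}. You also check that the range $1\le i\le n$ covers the whole support of $I_{n,r}$, and that for $r=1$ the extra $i=0$ atom contributes $\log 1=0$. That check is correct, and it fills in a detail the paper leaves implicit.
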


\begin{proof}
By the definition of $I_{n,r}$,
\[
A_r(n)
=
D_r(n;1)\,
\mathbb E[\log(I_{n,r}+r)].
\]
By Theorem~\ref{thm:diagonalmass},
\[
D_r(n;1)\sim C_r n,
\]
while Lemma~\ref{lem:logweight} gives
\[
\mathbb E[\log(I_{n,r}+r)]
\sim\log\log n.
\]
Multiplying the two asymptotics proves the claim.
\end{proof}

The proof also explains the normalization in
Theorem~\ref{thm:main}.  It is not an accidental scale introduced to
obtain a finite limit.  Rather,
\[
 \underbrace{n}_{\text{mass of the diagonal}}
 \times
 \underbrace{\log\log n}_{\text{typical logarithmic weight}}
\]
arises from two distinct features of the same generating function.
The coefficient variable $u$ isolates the fixed excess and produces
the constant $C_r$, while the marker $z$ locates the number of parts
around $\log n$.

\begin{remark}
The stronger marked asymptotic
\[
        D_r(n;z)
        \sim n^zK_r(z)
\]
contains more information than is needed for
Theorem~\ref{thm:main}.  Its form suggests a mod-Poisson description
of $I_{n,r}$ and, consequently, a central limit theorem after
centering by $\log n$ and scaling by $\sqrt{\log n}$.  We leave this
probabilistic refinement for future work.
\end{remark}

\section{Partition refinement and the non-hook corrections}
\label{sec:partition-refinement}

Theorem~\ref{thm:main} reduces the asymptotic behavior of the $r$th
diagonal to the coefficient
\[
        C_r=[u^{r-1}]\mathcal H(u).
\]
Its decomposition reflects precisely the partition indexing introduced
in Section~2: every partition $\nu\vdash r-1$ contributes one term to
$C_r$, corresponding to the Young-shape family
\[
        (\nu_1+1,\ldots,\nu_d+1,1^m),\qquad m\geq0.
\]
This gives a precise way of separating Segovia's hook contribution
from the remaining shapes.

Recall that
\[
 \mathcal H(u)
 =
 \prod_{q=1}^\infty
 \frac{q}{q+1}
 \left(
 1+\frac{e^{u/q}-1}{u}
 \right).
\]
For an individual factor we have
\begin{align*}
 \frac{q}{q+1}
 \left(
 1+\frac{e^{u/q}-1}{u}
 \right)
 &=
 \frac{q}{q+1}
 \left(
 1+\frac1q+
 \sum_{k\geq1}
 \frac{u^k}{(k+1)!\,q^{k+1}}
 \right)\\
 &=
 1+
 \sum_{k\geq1}
 \frac{u^k}{(k+1)!\,q^k(q+1)}.
\end{align*}
It is therefore convenient to set
\begin{equation}\label{eq:ak}
        a_k(q)
        :=
        \frac{1}{(k+1)!\,q^k(q+1)},
        \qquad k,q\geq1.
\end{equation}
Then
\begin{equation}\label{eq:Hcal-ak}
        \mathcal H(u)
        =
        \prod_{q\geq1}
        \left(
        1+\sum_{k\geq1}a_k(q)u^k
        \right).
\end{equation}

The combinatorics of this product is now transparent.  To obtain a
term of degree $r-1$, choose nonconstant terms from some number $d$
of distinct factors and choose positive exponents
\[
        k_1,\ldots,k_d
        \qquad\text{with}\qquad
        k_1+\cdots+k_d=r-1.
\]
After arranging these exponents in decreasing order, they form a
partition
\[
        \nu=(\nu_1,\ldots,\nu_d)\vdash r-1.
\]
Thus the coefficient $C_r$ naturally decomposes according to the
partitions of $r-1$.

For a partition $\nu$, let $m_j(\nu)$ denote the multiplicity of the
part $j$ in $\nu$.

\begin{definition}
Let
\[
        \nu=(\nu_1,\ldots,\nu_d)\vdash r-1.
\]
We define its contribution to the $r$th diagonal by
\begin{equation}\label{eq:Cnu}
 C_\nu
 :=
 \frac{1}{\prod_{j\geq1}m_j(\nu)!}
 \sum_{\substack{q_1,\ldots,q_d\geq1\\
                 q_a\neq q_b\;(a\neq b)}}
 \prod_{s=1}^d a_{\nu_s}(q_s).
\end{equation}
\end{definition}

Since $a_k(q)=O(q^{-k-1})$ for every $k\geq1$, the sum in
\eqref{eq:Cnu} converges absolutely; in particular, $C_\nu>0$.
The multiplicity factor in \eqref{eq:Cnu} accounts for the fact that
the indices $q_1,\ldots,q_d$ are ordered, whereas the parts of
$\nu$ form a multiset.  For a fixed unordered set of $d$ distinct
indices, the ordered sum produces $d!$ assignments.  If the part
$j$ occurs $m_j(\nu)$ times, permutations among those equal parts
do not produce distinct assignments.  Hence the number of distinct
assignments is
\[
        \frac{d!}{\prod_{j\geq1}m_j(\nu)!},
\]
which is exactly what the factor
$\bigl(\prod_j m_j(\nu)!\bigr)^{-1}$ produces from the ordered
sum in \eqref{eq:Cnu}.

\begin{theorem}[Partition decomposition]
\label{thm:partitiondecomp}
For every $r\geq2$,
\begin{equation}\label{eq:Cr-partition}
        C_r
        =
        \sum_{\nu\vdash r-1}C_\nu.
\end{equation}
Under the correspondence of Corollary~\ref{cor:shape}, the term
$C_\nu$, for
\[
        \nu=(\nu_1,\ldots,\nu_d),
\]
is associated with the Young-shape family
\[
        (\nu_1+1,\ldots,\nu_d+1,1^m),
        \qquad m\geq0.
\]
\end{theorem}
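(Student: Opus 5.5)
The plan is to expand the infinite product \eqref{eq:Hcal-ak} directly and identify the coefficient of $u^{r-1}$ term by term. Everything hinges on knowing that the product can be multiplied out as a formal power series and that the resulting coefficients are given by absolutely convergent sums. First I will record the bound
\[
\sum_{q\geq1}\sum_{k\geq1}a_k(q)\rho^k
\le\sum_{q\geq1}\frac{e^{\rho/q}-1-\rho/q}{\rho\,(q+1)/q}<\infty
\]
for every $\rho>0$. Hence the product $\prod_q\bigl(1+\sum_k a_k(q)u^k\bigr)$ converges normally on $|u|\le\rho$, and $\mathcal H(u)$ is entire. The same bound shows that the expansion
\[
\mathcal H(u)=\sum_{d\geq0}\ \sum_{\{q_1,\dots,q_d\}}\ \prod_{s=1}^d\Bigl(\sum_{k\geq1}a_k(q_s)u^k\Bigr),
\]
taken over finite sets of distinct indices, converges absolutely. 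Indeed, the sum of absolute values is dominated by $\prod_q\bigl(1+\sum_k a_k(q)\rho^k\bigr)<\infty$. This follows from the standard identity for finite products together with monotone convergence as $n\to\infty$, using the fact that the truncated products $\prod_{q\le n}$ converge to $\mathcal H$ by Lemma~\ref{lem:normal} at $z=1$.

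Second, since all $a_k(q)>0$, Tonelli lets me rearrange the absolutely convergent double series freely and collect the coefficient of $u^{r-1}$. A contributing term consists of a finite set $\{q_1,\dots,q_d\}$ of distinct indices and exponents $k_s\ge1$ with $\sum_s k_s=r-1$. Since each $k_s\ge1$, necessarily $d\le r-1$ and only finitely many exponent patterns occur. I will group the terms by the partition $\nu\vdash r-1$ obtained by sorting $(k_1,\dots,k_d)$. This gives
\[
C_r=[u^{r-1}]\mathcal H(u)=\sum_{\nu\vdash r-1}\ \sum_{(\text{set},\ \text{assignment of parts of }\nu)}\prod_s a_{k_s}(q_s).
\]

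Third, I will check that the inner sum equals $C_\nu$ as defined in \eqref{eq:Cnu}. Consider the ordered tuples $(q_1,\dots,q_d)$ of distinct indices, where $q_s$ receives the part $\nu_s$. Each such tuple determines an unordered set with an assignment of exponents to its elements. Two tuples give the same set-with-assignment exactly when they differ by a permutation of positions carrying equal parts, and there are $\prod_j m_j(\nu)!$ such permutations, acting freely. Dividing the ordered sum by $\prod_j m_j(\nu)!$ therefore counts each set-with-assignment exactly once, which is the multiplicity argument given after \eqref{eq:Cnu}. The association with the Young family $(\nu_1+1,\dots,\nu_d+1,1^m)$ is then just Corollary~\ref{cor:shape}, since $\nu$ here is the same partition of the excess $r-1$.

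The only step needing real care is the first: justifying that the coefficient of the infinite product equals the sum over finite index sets, i.e.\ the interchange of the limit $n\to\infty$ with expansion. I would handle it either by the positivity and Tonelli argument above or, alternatively, by applying Cauchy's formula to the locally uniform limit from Lemma~\ref{lem:normal}. With the second route, $[u^{r-1}]\prod_{q\le n}$ is a finite sum of the same shape restricted to $q_s\le n$, and it increases monotonically to the full sum.
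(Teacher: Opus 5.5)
Your proposal is correct and follows the paper's argument: expand the product $\prod_q\bigl(1+\sum_k a_k(q)u^k\bigr)$, group the selections contributing to $u^{r-1}$ by the partition $\nu\vdash r-1$ of sorted exponents, and account for repeated parts with the factor $1/\prod_j m_j(\nu)!$. The paper leaves the analytic interchange implicit, and your positivity/Tonelli justification (or the alternative via Cauchy's formula and monotone convergence of the truncated coefficients) supplies that step rigorously.
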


\begin{proof}
Expand \eqref{eq:Hcal-ak}.  A contribution to $u^{r-1}$ is obtained
by selecting nonconstant terms from $d$ distinct factors, with
positive exponents whose sum is $r-1$.  Grouping these selections
according to the partition
\[
        \nu\vdash r-1
\]
formed by the selected exponents gives \eqref{eq:Cnu}.  The
multiplicity factorials account for repeated equal parts, as
described above.  Summing over all partitions of $r-1$ gives
\eqref{eq:Cr-partition}.  The correspondence with the Young-shape
families is exactly Corollary~\ref{cor:shape}.
\end{proof}

The one-part partition $\nu=(r-1)$ gives
\[
        C_{(r-1)}
        =
        \frac1{r!}
        \sum_{q\geq1}\frac1{q^{r-1}(q+1)}.
\]

\subsection{The hook term}

The decomposition \eqref{eq:Cr-partition} singles out one particularly
simple partition,
\[
        \nu=(r-1).
\]
It corresponds to the Young-shape family
\[
        (r,1^m),
\]
and hence to the hook family studied by Segovia.

For this one-part partition, \eqref{eq:Cnu} reduces to
\[
 C_{(r-1)}
 =
 \sum_{q\geq1}a_{r-1}(q)
 =
 \frac1{r!}
 \sum_{q\geq1}
 \frac1{q^{r-1}(q+1)}.
\]

\begin{corollary}[Segovia's hook component]
\label{cor:hook}
For every $r\geq2$,
\begin{equation}\label{eq:hook}
        C_{(r-1)}=\rho_r,
\end{equation}
where
\[
 \rho_r
 =
 \frac1{r!}
 \left[
 (-1)^r+
 \sum_{j=3}^{r}
 (-1)^{r+j}\zeta(j-1)
 \right]
\]
is Segovia's hook constant.  Consequently,
\begin{equation}\label{eq:correction}
        C_r-\rho_r
        =
        \sum_{\substack{\nu\vdash r-1\\
                        \ell(\nu)\geq2}}
        C_\nu.
\end{equation}
\end{corollary}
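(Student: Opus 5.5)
Using the formula for the one-part contribution $C_{(r-1)}$ derived just before the statement, the claim reduces to evaluating the single series
\[
        T_k:=\sum_{q\geq1}\frac{1}{q^{k}(q+1)},\qquad k=r-1\geq1,
\]
in closed form, because
\[
        C_{(r-1)}=\frac{1}{r!}\,T_{r-1}.
\]
Once the identity \eqref{eq:hook} is known, the consequence \eqref{eq:correction} follows at once by subtracting the $\nu=(r-1)$ term from the partition decomposition \eqref{eq:Cr-partition} of Theorem~\ref{thm:partitiondecomp}.

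To evaluate $T_k$, I will use the partial fraction identity
\[
        \frac{1}{q^k(q+1)}=\sum_{i=2}^{k}\frac{(-1)^{k-i}}{q^{i}}+(-1)^{k-1}\Bigl(\frac1q-\frac1{q+1}\Bigr).
\]
It can be checked by induction on $k$ from the relation
\[
        \frac{1}{q^k(q+1)}=\frac{1}{q^k}-\frac{1}{q^{k-1}(q+1)}.
\]
For $k=1$ it reduces to $\frac1q-\frac1{q+1}$, which telescopes to $1$.

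Next I sum over $q\geq1$. Every term with $i\geq2$ converges absolutely and contributes $\zeta(i)$. The remaining telescoping part contributes $1$. This gives
\[
        T_k=(-1)^{k-1}+\sum_{i=2}^{k}(-1)^{k-i}\zeta(i).
\]
Splitting the series termwise is legitimate because every piece converges separately: the telescoping pair is grouped together, and all the zeta terms have $i\geq2$.

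Finally I reindex with $k=r-1$ and $j=i+1$, so $j$ runs from $3$ to $r$. The signs become $(-1)^{k-1}=(-1)^{r}$ and $(-1)^{k-i}=(-1)^{r-j}=(-1)^{r+j}$. Hence
\[
        T_{r-1}=(-1)^r+\sum_{j=3}^{r}(-1)^{r+j}\zeta(j-1),
\]
and dividing by $r!$ gives exactly Segovia's $\rho_r$. For $r=2$ the sum is empty and $\rho_2=\frac12$, which matches $T_1=1$.

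The only points needing care are the sign bookkeeping in this reindexing and the fact that $\rho_r$ is taken as defined by the displayed formula. If one instead wanted to match Segovia's original definition of $\rho_r$ through hook sums, an extra step would be required: apply Theorem~\ref{thm:diagonalmass}-type reasoning to the hook family alone. I would avoid that and rely on the displayed closed form.
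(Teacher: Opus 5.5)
Your proposal is correct and follows essentially the same route as the paper: specialize the definition of $C_\nu$ to $\nu=(r-1)$, evaluate $\sum_{q\ge1}q^{-(r-1)}(q+1)^{-1}$ in closed form, and subtract the hook term from the partition decomposition of $C_r$. The only difference is that you actually prove the ``elementary identity'' that the paper merely asserts, via the partial-fraction recursion $\frac{1}{q^k(q+1)}=\frac{1}{q^k}-\frac{1}{q^{k-1}(q+1)}$; your induction, telescoping step and sign reindexing all check out.
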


\begin{proof}
The first equality follows from
\[
 C_{(r-1)}
 =
 \frac1{r!}
 \sum_{q\geq1}
 \frac1{q^{r-1}(q+1)}.
\]
The elementary identity
\[
 \sum_{q\geq1}
 \frac1{q^{r-1}(q+1)}
 =
 (-1)^r+
 \sum_{j=3}^{r}
 (-1)^{r+j}\zeta(j-1)
\]
gives Segovia's formula for $\rho_r$.  Equation
\eqref{eq:correction} then follows from
Theorem~\ref{thm:partitiondecomp}.
\end{proof}

Thus the difference between the complete constant $C_r$ and the hook
constant $\rho_r$ has a simple combinatorial meaning.  It is not a
single correction term, but the sum of the contributions associated
with all partitions of $r-1$ having at least two parts.  Equivalently,
it is the total contribution of all non-hook Young-shape families on
the $r$th diagonal.

\subsection{The first complete diagonals}

It is useful to see how the decomposition works in the first cases.
For $r=2$, the only partition of $r-1=1$ is $(1)$, corresponding to
the hook family $(2,1^m)$, and hence
\[
        C_2=C_{(1)}=\rho_2=\frac12.
\]

The first non-hook contribution occurs for $r=3$.  The partitions of
$r-1=2$ are
\[
        (2),\qquad (1,1),
\]
corresponding respectively to the Young-shape families
\[
        (3,1^m),\qquad (2,2,1^m).
\]
Thus
\[
        C_3=C_{(2)}+C_{(1,1)},
\]
where
\[
        C_{(2)}=\rho_3=\frac{\zeta(2)-1}{6},
        \qquad
        C_{(1,1)}=\frac12-\frac{\pi^2}{24},
\]
and therefore
\[
        C_3=\frac13-\frac{\pi^2}{72}.
\]

For $r=4$, the partitions
\[
        (3),\qquad(2,1),\qquad(1,1,1)
\]
give the hook family together with two non-hook families, and their
sum is
\[
        C_4
        =
        \frac14-\frac{\pi^2}{72}
        -\frac{\zeta(3)}{24}.
\]

The case $r=5$ already displays the full structure needed below.  The
partitions of $r-1=4$ are
\[
        (4),\quad(3,1),\quad(2,2),\quad
        (2,1,1),\quad(1,1,1,1),
\]
corresponding to the Young-shape families
\[
\begin{split}
 &(5,1^m),\qquad
 (4,2,1^m),\qquad
 (3,3,1^m),\\
 &(3,2,2,1^m),\qquad
 (2,2,2,2,1^m).
\end{split}
\]
The resulting constant is
\begin{equation}\label{eq:C5}
 C_5
 =
 \frac15-\frac{\pi^2}{80}
 -\frac{31\zeta(3)}{720}
 -\frac{7\pi^4}{86400}.
\end{equation}

What is important here is that the contributions are naturally
grouped not only by their total excess
\[
        |\nu|=r-1,
\]
but also by the number of parts $\ell(\nu)$.  At $r=5$,
\[
\begin{array}{c|c}
\ell(\nu) & \nu \\ \hline
1 &(4)\\
2 &(3,1),(2,2)\\
3 &(2,1,1)\\
4 &(1,1,1,1).
\end{array}
\]
The first row is the hook contribution, while the remaining rows
separate the non-hook terms according to the number of non-unit rows
in the corresponding Young-shape family.  This second statistic is
the grading introduced in the next section; in Section~7 it will
reappear analytically as multiple-zeta depth.
\section{A second grading}
\label{sec:second-grading}

The decomposition of Section~4 is organized by the total excess
\[
        |\nu|=r-1.
\]
The length of the excess partition provides a second natural
statistic.  If
\[
        \nu=(\nu_1,\ldots,\nu_d)\vdash r-1,
\]
then
\[
        d=\ell(\nu)
\]
is the number of non-unit rows in the corresponding Young-shape family
\[
        (\nu_1+1,\ldots,\nu_d+1,1^m).
\]
Thus each contribution $C_\nu$ carries two indices: its total excess
$|\nu|$ and its number of non-unit rows $\ell(\nu)$.

Recall from \eqref{eq:Hcal-ak} that
\[
 \mathcal H(u)
 =
 \prod_{q\geq1}
 \left(
 1+\sum_{k\geq1}a_k(q)u^k
 \right),
\]
where
\[
        a_k(q)
        =
        \frac{1}{(k+1)!\,q^k(q+1)}.
\]
For each $q$, write
\begin{equation}\label{eq:Bq}
 B_q(u)
 :=
 \sum_{k\geq1}a_k(q)u^k.
\end{equation}
Using the exponential series, this can also be written as
\begin{equation}\label{eq:Bq-explicit}
 B_q(u)
 =
 \frac{q(e^{u/q}-1)-u}{u(q+1)},
\end{equation}
where the apparent singularity at $u=0$ is removable and
$B_q(0)=0$.

Selecting the nonconstant term $B_q(u)$ from a factor means that one
non-unit part has been selected.  We may therefore mark each such
selection by a variable $y$.

\begin{definition}
We define the doubly graded generating function
\begin{equation}\label{eq:Hscr}
 \mathscr H(u,y)
 :=
 \prod_{q=1}^\infty
 \left(1+yB_q(u)\right),
\end{equation}
or equivalently,
\[
 \mathscr H(u,y)
 =
 \prod_{q=1}^\infty
 \left[
 1+
 y\,\frac{q(e^{u/q}-1)-u}{u(q+1)}
 \right].
\]
\end{definition}

The two variables now play distinct roles:
\[
 \boxed{
 \begin{array}{ccl}
 u &\text{records}&\text{total excess},\\[1mm]
 y &\text{records}&\text{number of non-unit rows}.
 \end{array}}
\]
Indeed, selecting a term $a_k(q)u^k$ contributes $k$ units of excess
and one power of $y$.

Before extracting coefficients, we record that the infinite product
is well defined.  For $u$ in a compact set,
\[
        B_q(u)=O(q^{-2})
        \qquad(q\to\infty),
\]
uniformly in $u$. Indeed, for $|u|\leq R$ and $|y|\leq M$,
\[
        |yB_q(u)|\leq M C_R q^{-2}.
\]
Hence $\sum_{q\geq1}|yB_q(u)|$ converges uniformly on compact
subsets of $\mathbb C^2$, and the product
$\mathscr H(u,y)$ converges normally there.

\begin{definition}
For $r\geq2$ and $1\leq d\leq r-1$, define
\begin{equation}\label{eq:Crd}
        C_{r,d}
        :=
        [u^{r-1}y^d]\mathscr H(u,y).
\end{equation}
\end{definition}

The coefficient $C_{r,d}$ collects exactly those partitions of
$r-1$ having $d$ parts.

\begin{proposition}[The doubly graded decomposition]
\label{prop:triangle}
For $r\geq2$ and $1\leq d\leq r-1$,
\begin{equation}\label{eq:Crd-Cnu}
        C_{r,d}
        =
        \sum_{\substack{\nu\vdash r-1\\
                        \ell(\nu)=d}}
        C_\nu.
\end{equation}
Consequently,
\begin{equation}\label{eq:row-sum}
        C_r
        =
        \sum_{d=1}^{r-1}C_{r,d}.
\end{equation}
Moreover,
\begin{equation}\label{eq:first-column}
        C_{r,1}=\rho_r.
\end{equation}
\end{proposition}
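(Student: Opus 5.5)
The plan is to expand the product $\mathscr H(u,y)=\prod_{q\geq1}(1+yB_q(u))$ as a formal double power series in $u$ and $y$, and to read off the coefficient of $u^{r-1}y^d$.

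First I justify coefficient extraction. The product converges normally on compact subsets of $\mathbb C^2$, as recorded before the definition of $\mathscr H$. Hence $\mathscr H$ is entire in $(u,y)$, and the partial products $\prod_{q\le N}(1+yB_q(u))$ converge locally uniformly to it. By Cauchy's integral formula in two variables,
\[
[u^{r-1}y^d]\mathscr H=\lim_{N\to\infty}[u^{r-1}y^d]\prod_{q\le N}(1+yB_q(u)).
\]

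Next I compute the finite coefficient. Expanding the finite product, a monomial containing $y^d$ comes from choosing the term $yB_q(u)$ in exactly $d$ distinct factors $q_1<\cdots<q_d\le N$ and the constant $1$ in all other factors. From each selected factor I take one term $a_{k_s}(q_s)u^{k_s}$ with $k_s\geq1$, and the constraint is $\sum k_s=r-1$. Sorting the exponent vector $(k_1,\dots,k_d)$ gives a partition $\nu\vdash r-1$ with $\ell(\nu)=d$. I then pass from sets of indices with an assigned exponent vector to ordered tuples of distinct indices carrying the sorted parts $\nu_1,\dots,\nu_d$. The counting argument given after \eqref{eq:Cnu} shows this introduces exactly the factor $1/\prod_j m_j(\nu)!$. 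So the finite coefficient is the truncation to $q_s\le N$ of $\sum_{\ell(\nu)=d}C_\nu$.

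Letting $N\to\infty$ and using absolute convergence of \eqref{eq:Cnu}, the truncated sums converge to $C_\nu$, which gives \eqref{eq:Crd-Cnu}. The only point requiring care is this interchange of the limit with the combinatorial regrouping; it is harmless because all $a_k(q)$ are positive, so monotone convergence applies. I do not expect a genuine obstacle.

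For \eqref{eq:row-sum}, I set $y=1$. Then $\mathscr H(u,1)=\prod_q(1+B_q(u))=\mathcal H(u)$ by \eqref{eq:Hcal-ak}. Since every partition of $r-1$ has length between $1$ and $r-1$, summing \eqref{eq:Crd-Cnu} over $d$ and applying Theorem~\ref{thm:partitiondecomp} yields $C_r=\sum_{\nu\vdash r-1}C_\nu=\sum_{d=1}^{r-1}C_{r,d}$. Alternatively, one can read this directly as $[u^{r-1}]\mathscr H(u,1)$, noting that the coefficient of $u^{r-1}$ is a polynomial in $y$ of degree at most $r-1$, because each selected $B_q$ has no constant term.

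Finally, for \eqref{eq:first-column}, with $d=1$ the only partition of $r-1$ of length one is $(r-1)$. Hence $C_{r,1}=C_{(r-1)}$, and this equals $\rho_r$ by Corollary~\ref{cor:hook}.
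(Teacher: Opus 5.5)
Your proposal is correct and follows the same route as the paper: expand $\mathscr H(u,y)$, take the nonconstant term $yB_q(u)$ from exactly $d$ distinct factors, and group the selections by the sorted exponent partition $\nu\vdash r-1$ with $\ell(\nu)=d$. The row sum then follows from Theorem~\ref{thm:partitiondecomp}, and the case $d=1$ from Corollary~\ref{cor:hook}. Your extra justification (coefficient extraction through finite partial products, Cauchy's formula, and monotone convergence of the positive truncated sums) only makes explicit what the paper leaves implicit in ``expand the product''; it is not a different argument.
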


\begin{proof}
Expand the product \eqref{eq:Hscr}.  A term containing $y^d$ is
obtained by choosing the nonconstant term from exactly $d$ distinct
factors.  If the corresponding powers of $u$ are
\[
        k_1,\ldots,k_d\geq1,
\]
then the total power of $u$ is
\[
        k_1+\cdots+k_d.
\]
Thus a contribution to $u^{r-1}y^d$ is indexed by a partition
$\nu\vdash r-1$ with exactly $d$ parts.  Grouping the terms by this
partition gives \eqref{eq:Crd-Cnu}.

Summing over $d$ recovers all partitions of $r-1$ and therefore,
by Theorem~\ref{thm:partitiondecomp}, gives
\eqref{eq:row-sum}.  Finally, $d=1$ leaves only the one-part
partition $(r-1)$, whose contribution is $\rho_r$ by
Corollary~\ref{cor:hook}.
\end{proof}

The coefficients $C_{r,d}$ can therefore be displayed as a triangular
array:
\[
\begin{array}{c|ccccc}
 & d=1 & d=2 & d=3 & d=4 & \cdots \\ \hline
 r=2 & C_{2,1} \\
 r=3 & C_{3,1} & C_{3,2} \\
 r=4 & C_{4,1} & C_{4,2} & C_{4,3} \\
 r=5 & C_{5,1} & C_{5,2} & C_{5,3} & C_{5,4}\\
 r=6 & C_{6,1} & C_{6,2} & C_{6,3} & C_{6,4}
       & C_{6,5}\\
 \vdots & \vdots & \vdots & \vdots & \vdots & \ddots
\end{array}
\]
The rows reproduce the constants obtained in Section~
\ref{sec:partition-refinement}:
\[
        C_r=C_{r,1}+\cdots+C_{r,r-1}.
\]
The first column is Segovia's hook sequence,
\[
        C_{r,1}=\rho_r,
\]
while the remaining columns separate the non-hook corrections
according to the number of non-unit rows.

For example, the decomposition of $C_5$ obtained in the previous
section becomes
\[
\begin{aligned}
 C_{5,1}
   &=C_{(4)},\\
 C_{5,2}
   &=C_{(3,1)}+C_{(2,2)},\\
 C_{5,3}
   &=C_{(2,1,1)},\\
 C_{5,4}
   &=C_{(1,1,1,1)}.
\end{aligned}
\]
Thus
\[
        C_5
        =
        C_{5,1}+C_{5,2}+C_{5,3}+C_{5,4}.
\]

Setting $y=1$ forgets the number of non-unit rows and recovers the
one-variable product:
\[
        \mathscr H(u,1)=\mathcal H(u).
\]
Thus $\mathscr H(u,y)$ refines $H(u)$ without changing the
constants $C_r$, resolving each of them according to $\ell(\nu)$.

Setting $u=1$, on the other hand, leads to a decomposition by the
number of non-unit rows across all excesses.
Instead of summing across a row of the array, this
amounts to summing down its columns.  At this point the second
grading acquires a meaning that is not apparent from the Young
lattice alone.  Indeed,
\[
 B_q(1)
 =
 \frac{e^{1/q}}{1+1/q}-1,
\]
which is precisely the quantity that appears in the depth generating
function associated with the multiple-zeta-value expansion of
$e^\gamma$.

We postpone the MZV identification itself until
Section~\ref{sec:mzv}, where the necessary convergence issue will be
handled with an Abel regulator.  First, in the next section, we study
the column sums of the array directly.  This already reveals that
the row and column decompositions are two ways of organizing the
same positive total mass.

\section{Column sums and $e^\gamma$}
\label{sec:columns}

The row sums of the array $(C_{r,d})$ recover the fixed-excess
constants $C_r$.  The column decomposition is obtained by setting $u=1$:
\begin{equation}
\mathscr H(1,y)
=
\prod_{q\geq1}(1+yb_q),
\label{eq:Hscr-u1}
\end{equation}
where
\begin{equation}
b_q:=B_q(1)
=
\frac{e^{1/q}}{1+1/q}-1.
\label{eq:bq}
\end{equation}
The numbers $b_q$ are positive.  Indeed,
\[
        e^{1/q}>1+\frac1q,
\]
and hence $b_q>0$.  Moreover,
\[
        b_q
        =
        \frac{1}{2q^2}+O(q^{-3}),
        \qquad q\to\infty,
\]
so that
\[
        \sum_{q\geq1}b_q<\infty.
\]
Consequently, the product \eqref{eq:Hscr-u1} converges absolutely
and its coefficients in $y$ are nonnegative and finite.

\begin{definition}
For $d\geq1$, define the $d$th column mass by
\begin{equation}\label{eq:Sd-def}
        S_d
        :=
        \sum_{r\geq d+1}C_{r,d}.
\end{equation}
We also set $S_0=1$.
\end{definition}

The lower limit $r=d+1$ simply reflects the fact that a partition
of $r-1$ with $d$ positive parts can exist only when $r-1\geq d$.

\begin{theorem}[Column sums]\label{thm:columns}
For every $d\geq1$,
\begin{equation}\label{eq:Sd}
 \begin{aligned}
        S_d
        &=
        [y^d]\mathscr H(1,y)\\
        &=
        \sum_{1\leq q_1<\cdots<q_d}
        b_{q_1}\cdots b_{q_d}.
 \end{aligned}
\end{equation}
Equivalently, $S_d$ is the $d$th elementary symmetric function of
the positive summable sequence
\[
        (b_1,b_2,b_3,\ldots).
\]
\end{theorem}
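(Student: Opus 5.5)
The plan is to compute $[y^d]\mathscr H(1,y)$ in two ways and match the results, using nonnegativity to justify every interchange of summation. There are three steps.

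\textbf{Step 1: $[y^d]\mathscr H(1,y)$ as an elementary symmetric function.} By \eqref{eq:Hscr-u1},
\[
\mathscr H(1,y)=\prod_{q\geq1}(1+yb_q),
\]
with $b_q>0$ and $\sum_q b_q<\infty$. For $|y|\le M$ the product converges normally, so it is an entire function of $y$. Its coefficients are obtained by expanding the finite products $\prod_{q\le N}(1+yb_q)$, whose $y^d$ coefficient is
\[
e_d(b_1,\dots,b_N)=\sum_{1\le q_1<\cdots<q_d\le N} b_{q_1}\cdots b_{q_d}.
\]
Coefficients pass to the limit $N\to\infty$ by Cauchy's formula, as in Lemma~\ref{lem:normal}. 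Independently, $e_d(b_1,\dots,b_N)$ increases to the full sum over $q_1<\cdots<q_d$ by monotone convergence. That sum is finite, being bounded by $(\sum_q b_q)^d/d!$. This gives the second equality in \eqref{eq:Sd}.

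\textbf{Step 2: expand each $b_q$ in powers of $u$.} By \eqref{eq:Bq} at $u=1$, $b_q=B_q(1)=\sum_{k\ge1}a_k(q)$, with all $a_k(q)>0$. Substituting into the ordered form of the elementary symmetric sum gives
\[
\sum_{q_1<\cdots<q_d}\ \sum_{k_1,\dots,k_d\ge1}\ \prod_{s=1}^d a_{k_s}(q_s).
\]
All terms are nonnegative, so Tonelli lets me regroup freely. I group the terms by $r-1=k_1+\cdots+k_d$ and then by the partition $\nu\vdash r-1$ obtained by sorting $(k_s)$.

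\textbf{Step 3: identify each group with $C_{r,d}$.} For fixed $r$, the group with total $k_1+\cdots+k_d=r-1$ is exactly what the proof of Proposition~\ref{prop:triangle} identifies as $[u^{r-1}y^d]\mathscr H(u,y)=C_{r,d}$. That is, choose $d$ distinct factors and positive exponents summing to $r-1$. The multiplicity factor is the same bookkeeping as in \eqref{eq:Cnu}: unordered sets $q_1<\cdots<q_d$ paired with ordered exponent tuples are equivalent to ordered distinct indices carrying the parts of $\nu$, divided by $\prod_j m_j(\nu)!$. Summing over $r\ge d+1$ then gives $\sum_r C_{r,d}=S_d$, which proves the first equality.

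As a cleaner alternative to Step 3, I can use that $\mathscr H(u,y)$ is entire in two variables with all Taylor coefficients $C_{r,d}\ge0$. Then $[y^d]\mathscr H(u,y)=\sum_r C_{r,d}u^{r-1}$ is an entire function of $u$. Setting $u=1$ and noting that the $y^d$ coefficient commutes with specialization at $u=1$ (Cauchy's formula in $y$ for fixed $u$) gives the same identity.

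\textbf{Main obstacle.} The only delicate point is justifying the interchange of the infinite sums over $q$ and over $k$, and making sure no coefficient or symmetry factor is lost. Positivity of every $a_k(q)$, together with the finiteness bound from Step 1, reduces this to Tonelli's theorem. So the argument is mainly bookkeeping plus one monotone-convergence check.
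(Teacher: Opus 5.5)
Your proposal is correct and follows essentially the paper's argument. The paper uses nonnegativity of the coefficients and Tonelli to sum over the $u$-degree, obtaining $\sum_{r\geq d+1}C_{r,d}=[y^d]\mathscr H(1,y)$, and then expands $\prod_{q}(1+yb_q)$; your ``cleaner alternative'' is exactly that proof, while your Steps~1--3 spell out the same interchange explicitly through the $a_k(q)$ and supply convergence details the paper leaves implicit (finite truncations, monotone convergence, the bound $(\sum_q b_q)^d/d!$).
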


\begin{proof}
By definition,
\[
        C_{r,d}
        =
        [u^{r-1}y^d]\mathscr H(u,y).
\]
All coefficients of $\mathscr H(u,y)$ are nonnegative.  We may
therefore sum over the $u$-degree and use Tonelli's theorem to obtain
\[
 \sum_{r\geq d+1}C_{r,d}
 =
 [y^d]\mathscr H(1,y).
\]
Expanding
\[
        \mathscr H(1,y)
        =
        \prod_{q\geq1}(1+yb_q)
\]
then gives
\[
 [y^d]\mathscr H(1,y)
 =
 \sum_{1\leq q_1<\cdots<q_d}
 b_{q_1}\cdots b_{q_d},
\]
as claimed.
\end{proof}

Thus the column decomposition has a simple interpretation even before
multiple zeta values enter the picture.  The first column is
\[
        S_1=\sum_{q\geq1}b_q
            =\sum_{r\geq2}C_{r,1}
            =\sum_{r\geq2}\rho_r,
\]
and therefore collects all hook contributions.  The second column
collects all contributions arising from partitions with two non-unit
rows, the third those with three non-unit rows, and so on.

The total mass of all columns can now be evaluated directly.

\begin{theorem}[Total mass]\label{thm:totalmass}
With $S_0=1$,
\begin{equation}\label{eq:egamma-columns}
        \sum_{d\geq0}S_d=e^\gamma.
\end{equation}
Equivalently,
\begin{equation}\label{eq:egamma-array}
        e^\gamma
        =
        1+
        \sum_{d\geq1}
        \sum_{r\geq d+1}C_{r,d}.
\end{equation}
\end{theorem}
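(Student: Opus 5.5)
The proof evaluates $\mathscr H(1,1)$ in two ways. By Theorem~\ref{thm:columns}, each $S_d$ is the $d$th elementary symmetric function of the positive summable sequence $(b_q)_{q\geq1}$. The first task is to justify
\[
        \sum_{d\geq0}S_d=\prod_{q\geq1}(1+b_q)=\mathscr H(1,1).
\]
For the truncated product $\prod_{q\leq N}(1+b_q)$ this is a finite expansion: it equals the sum over finite subsets of $\{1,\ldots,N\}$ of the products of the $b_q$. All terms are nonnegative. Letting $N\to\infty$ and applying monotone convergence (Tonelli for counting measure on finite subsets of $\mathbb N$) gives the sum over all finite subsets of $\mathbb N$. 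The empty set contributes $S_0=1$, and grouping the remaining subsets by cardinality $d$ gives $\sum_{d\ge1}S_d$. The product converges because $\sum b_q<\infty$, so both sides are finite.

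Next I evaluate the product. By \eqref{eq:bq}, $1+b_q=\frac{q}{q+1}e^{1/q}$. The partial product is therefore
\[
        \prod_{q=1}^N\frac{q}{q+1}e^{1/q}=\frac{1}{N+1}\exp(H_N),
\]
where $H_N$ is the $N$th harmonic number. Since $H_N-\log(N+1)\to\gamma$, this can be written as $\exp\bigl(H_N-\log(N+1)\bigr)\to e^\gamma$. This proves \eqref{eq:egamma-columns}.

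For \eqref{eq:egamma-array}, substitute the definition \eqref{eq:Sd-def}, $S_d=\sum_{r\geq d+1}C_{r,d}$, for $d\geq1$. As a consistency check, the alternative identity $1+\sum_{r\ge2}\sum_{d}C_{r,d}=e^\gamma$ follows from the same nonnegativity via Tonelli. Indeed, $\mathscr H(1,1)=\mathcal H(1)=\sum_{r\geq1}C_r$ by nonnegativity of the coefficients. This works because each factor $1+yB_q(u)$ has nonnegative Taylor coefficients in $(u,y)$, so evaluating the product at $(1,1)$ is the same as summing all its coefficients.

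The only delicate point is this interchange of infinite product, coefficient extraction and summation. The plan is to use nonnegativity of every $a_k(q)$ together with monotone convergence on the truncated products, rather than any analytic continuation argument. Beyond that, the evaluation is the classical telescoping $\prod_{q\le N}\frac{q}{q+1}=\frac{1}{N+1}$ combined with the definition of $\gamma$.
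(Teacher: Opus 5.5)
Your proof is correct and follows the paper's argument. You write $\mathscr H(1,1)=\prod_{q\ge1}\frac{q}{q+1}e^{1/q}$, evaluate it through the telescoping partial products $e^{H_N}/(N+1)\to e^\gamma$, and identify it with $\sum_{d\ge0}S_d$; your monotone-convergence argument over finite subsets simply makes explicit the step $\prod_{q\ge1}(1+b_q)=\sum_{d\ge0}S_d$, which the paper asserts directly.
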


\begin{proof}
From \eqref{eq:bq},
\[
        1+b_q
        =
        \frac{e^{1/q}}{1+1/q}
        =
        \frac{q}{q+1}e^{1/q}.
\]
Hence
\[
 \mathscr H(1,1)
 =
 \prod_{q\geq1}(1+b_q)
 =
 \prod_{q\geq1}
 \frac{q}{q+1}e^{1/q}.
\]
For the finite product,
\[
 \prod_{q=1}^n
 \frac{q}{q+1}e^{1/q}
 =
 \frac{e^{H_n}}{n+1}.
\]
Since
\[
        H_n-\log n\longrightarrow\gamma,
\]
we obtain
\[
        \mathscr H(1,1)
        =
        \lim_{n\to\infty}
        \frac{e^{H_n}}{n+1}
        =
        e^\gamma.
\]
On the other hand,
\[
        \mathscr H(1,1)
        =
        \sum_{d\geq0}S_d,
\]
which proves the result.
\end{proof}

Because all the coefficients $C_{r,d}$ are nonnegative, we may now
sum the array in either direction.  Combining
Proposition~\ref{prop:triangle} with
Theorem~\ref{thm:totalmass} gives
\begin{equation}\label{eq:two-sums}
 \boxed{
 \begin{aligned}
 e^\gamma
 &=
 1+\sum_{r\geq2}
       \underbrace{\sum_{d=1}^{r-1}C_{r,d}}_{C_r}\\
 &=
 1+\sum_{d\geq1}
       \underbrace{\sum_{r\geq d+1}C_{r,d}}_{S_d}.
 \end{aligned}}
\end{equation}
The first line reads the triangular array by rows and the second by
columns.

The row index has a meaning inherited from the Young lattice:
$r-1$ is the total excess.  The column index counts the number of
non-unit rows.  In Section~\ref{sec:mzv} we show that the same column index is
the depth in the multiple-zeta-value expansion.

\subsection{The first column masses}

It is instructive to compare the sizes of the first few columns.
Numerically,
\[
\begin{aligned}
 S_1&=0.6359041874\ldots,\\
 S_2&=0.1309505602\ldots,\\
 S_3&=0.0133672211\ldots,\\
 S_4&=8.162652\times10^{-4},\\
 S_5&=3.31983\times10^{-5}.
\end{aligned}
\]
Since $S_0=1$, the first two terms give
\[
        S_0+S_1
        =
        1.635904\ldots,
\]
which is precisely the total obtained from the hook contribution in
Segovia's analysis.  The remaining mass
\[
        e^\gamma-(1+S_1)
\]
is therefore distributed among the columns $d\geq2$.

The rapid decrease visible in these values also explains why the hook
families account for a substantial portion of the total even though
they do not exhaust it. The higher columns successively collect all Young-shape families
having larger numbers of non-unit rows.

At this stage the column decomposition has been obtained entirely
from the positive product $\mathscr H(1,y)$, without invoking multiple
zeta values.

\section{Multiple zeta values and the excess grading}
\label{sec:mzv}

Multiple zeta values already occur in Espinosa's formulation of the
problem.  In particular, Espinosa proposed an Abel-regularized
expansion of $e^\gamma$ organized by depth, and Hucht subsequently
expressed the corresponding depth generating function in terms of
the quantities $b_q$ introduced above~\cite{EspinosaMO2026,HuchtMO2026}.

Our purpose here is not to rederive that decomposition as a new
identity.  Rather, we retain the excess variable $u$ and show that
the resulting refinement is exactly the partition grading developed
in the preceding sections.

We use the ascending-index convention
\begin{equation}\label{eq:mzv-convention}
 \zeta(s_1,\ldots,s_d)
 :=
 \sum_{1\leq q_1<\cdots<q_d}
 \frac{1}
 {q_1^{s_1}\cdots q_d^{s_d}}.
\end{equation}
The integer $d$ is the depth of the multiple zeta value.

\subsection{Why a regulator is needed}

Recall that
\[
        a_k(q)
        =
        \frac{1}{(k+1)!\,q^k(q+1)}.
\]
Writing $a=k+1$, the denominator that must be expanded is
\[
        \frac{1}{q^{a-1}(q+1)}.
\]
Formally,
\[
 \frac{1}{q^{a-1}(q+1)}
 =
 \frac1{q^a}\frac1{1+1/q}
 =
 \sum_{m\geq0}\frac{(-1)^m}{q^{a+m}}.
\]
For $q>1$ this geometric expansion is absolutely convergent, but at
$q=1$ it lies exactly at the endpoint of convergence.  Thus the
formal manipulation suggests the correct MZV expression but does not
by itself justify the required rearrangements.

To justify the rearrangements, introduce an Abel parameter
$0\leq t<1$ and define
\begin{equation}\label{eq:Hcal-t}
 \mathcal H_t(u)
 :=
 \prod_{q=1}^\infty
 \left(
 1+
 \sum_{a\geq2}
 \frac{u^{a-1}}
 {a!\,q^{a-1}(q+t)}
 \right).
\end{equation}
At the endpoint,
\[
        \mathcal H_1(u)=\mathcal H(u).
\]
For $t<1$ we have the absolutely convergent geometric expansion
\begin{equation}\label{eq:geom-reg}
 \frac{1}{q^{a-1}(q+t)}
 =
 \frac1{q^a}\frac1{1+t/q}
 =
 \sum_{m\geq0}
 \frac{(-t)^m}{q^{a+m}}.
\end{equation}
The regulator therefore moves the problematic $q=1$ term into an
absolutely convergent regime.

\begin{lemma}[Absolute convergence with regulator]
\label{lem:regabs}
Fix $0\leq t<1$ and $R>0$.  Then
\begin{equation}\label{eq:regabs}
 \sum_{q\geq1}
 \sum_{a\geq2}
 \sum_{m\geq0}
 \frac{R^{a-1}t^m}
 {a!\,q^{a+m}}
 <\infty.
\end{equation}
Consequently, for $|u|\leq R$, the sums and products obtained by
expanding \eqref{eq:Hcal-t} and \eqref{eq:geom-reg} may be rearranged
absolutely.
\end{lemma}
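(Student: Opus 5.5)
Because every term of the triple sum in \eqref{eq:regabs} is nonnegative, Tonelli's theorem lets us sum in any order. The plan is to perform the sum over $m$ first, then the sum over $q$, and last the sum over $a$, so that all three are controlled by explicit closed forms.

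\emph{The sum over $m$.} For fixed $q\geq1$ and $a\geq2$ this is a geometric series:
\[
 \sum_{m\geq0}\frac{t^m}{q^{a+m}}
 =\frac{1}{q^a}\cdot\frac{1}{1-t/q}
 \leq\frac{1}{(1-t)\,q^{a}}.
\]
Here we use $t/q\leq t<1$, so $1-t/q\geq 1-t>0$. This is the only place the hypothesis $t<1$ enters; it is exactly what fails at $q=1$ when $t=1$.

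\emph{The sum over $q$.} Since $a\geq2$, we have $\sum_{q\geq1}q^{-a}=\zeta(a)\leq\zeta(2)$. The whole triple sum is therefore bounded by
\[
 \frac{\zeta(2)}{1-t}\sum_{a\geq2}\frac{R^{a-1}}{a!}
 \leq\frac{\zeta(2)}{1-t}\cdot\frac{e^R}{R}<\infty,
\]
which proves \eqref{eq:regabs}.

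\emph{The rearrangement.} Fix $|u|\leq R$. The expanded double sum $\sum_{a,m}u^{a-1}(-t)^m/(a!\,q^{a+m})$ is dominated termwise by the series just bounded, so it is absolutely convergent. Hence it may be regrouped freely; in particular the regrouping in \eqref{eq:geom-reg} is justified factor by factor.

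Write $c_q(u)$ for the nonconstant part of the $q$th factor of \eqref{eq:Hcal-t}. The same estimate, applied with $q$ held fixed, gives
\[
 \sum_{q\geq1}\sup_{|u|\leq R}|c_q(u)|<\infty.
\]
For the product itself, expand $\prod_q(1+c_q)$ as a sum over finite sets of distinct indices $q_1<\dots<q_d$, each index carrying its own choices of $(a,m)$. The absolute values of these terms sum to at most
\[
 \prod_q\Bigl(1+\sum_{a,m}\frac{R^{a-1}t^m}{a!\,q^{a+m}}\Bigr)
 \leq\exp\Bigl(\sum_{q,a,m}\frac{R^{a-1}t^m}{a!\,q^{a+m}}\Bigr)<\infty.
\]
So the full multi-index expansion is absolutely summable, and any grouping of it, for instance by powers of $u$ or by depth $d$, is legitimate.

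I expect no real obstacle. The only point needing care is the last step: absolute convergence must be established for the fully expanded product, not merely for each factor separately, and the $\exp$ bound above is what handles this.
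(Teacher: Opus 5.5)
Your proof is correct and follows essentially the same route as the paper: sum the geometric series in $m$, then use $a\geq2$ to get $q^{-2}$-type decay. The only difference is that you bound $1/(1-t/q)$ uniformly by $1/(1-t)$, whereas the paper isolates the $q=1$ term and uses $1/(1-t/q)\leq2$ for $q\geq2$, which shows that only $q=1$ becomes singular as $t\uparrow1$. Your estimate $\prod_q(1+M_q)\leq\exp\bigl(\sum_q M_q\bigr)$, with $M_q$ the $q$th slice of the triple sum, is a useful addition. The paper's proof leaves implicit the ``consequently'' clause about rearranging the fully expanded product, and your estimate supplies it.
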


\begin{proof}
For each $q$,
\[
 \sum_{a\geq2}\sum_{m\geq0}
 \frac{R^{a-1}t^m}
 {a!\,q^{a+m}}
 =
 \frac{1}{1-t/q}
 \sum_{a\geq2}
 \frac{R^{a-1}}{a!\,q^a}.
\]
The term $q=1$ is finite because $t<1$.  For $q\geq2$,
\[
        \frac{1}{1-t/q}\leq2,
\]
while
\[
        \sum_{a\geq2}
        \frac{R^{a-1}}{a!\,q^a}
        =
        O_R(q^{-2}).
\]
The result follows from the convergence of
$\sum_{q\geq2}q^{-2}$.
\end{proof}

We may therefore perform the MZV expansion before taking the endpoint
$t=1$.

\begin{theorem}[Regulated excess-graded MZV expansion]
\label{thm:MZVreg}
For $0\leq t<1$,
\begin{align}
 \mathcal H_t(u)
 =
 1+
 \sum_{d\geq1}
 \sum_{\substack{a_1,\ldots,a_d\geq2\\
                 m_1,\ldots,m_d\geq0}}
 &
 \frac{(-t)^{m_1+\cdots+m_d}}
 {a_1!\cdots a_d!}
 \nonumber\\
 &\times
 \zeta(a_1+m_1,\ldots,a_d+m_d)
 u^{\sum_{j=1}^d(a_j-1)}.
 \label{eq:MZVreg}
\end{align}
For each fixed $t<1$, the expansion is absolutely convergent on
compact subsets of the $u$-plane.
\end{theorem}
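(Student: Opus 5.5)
The plan is to expand the product \eqref{eq:Hcal-t} factor by factor, substitute the geometric expansion \eqref{eq:geom-reg}, and then reorganize the resulting multiple sum. All rearrangements will be licensed by the absolute convergence in Lemma~\ref{lem:regabs}. Fix $0\le t<1$ and $R>0$, and restrict to $|u|\le R$. Write the $q$th factor as $1+\beta_q(u;t)$, where
\[
 \beta_q(u;t)=\sum_{a\ge2}\sum_{m\ge0}\frac{(-t)^m u^{a-1}}{a!\,q^{a+m}}.
\]
By Lemma~\ref{lem:regabs}, the quantity
\[
 \bar\beta_q:=\sum_{a\geq2}\sum_{m\geq0}\frac{R^{a-1}t^m}{a!\,q^{a+m}}
\]
dominates $|\beta_q(u;t)|$ and satisfies $\sum_q\bar\beta_q<\infty$. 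Hence the product converges normally, and $\prod_q(1+\bar\beta_q)<\infty$.

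\emph{Expanding the product.} First I expand the infinite product as a sum over finite subsets $\{q_1<\cdots<q_d\}\subset\mathbb N$:
\[
 \mathcal H_t(u)=1+\sum_{d\ge1}\sum_{q_1<\cdots<q_d}\prod_{j=1}^d\beta_{q_j}(u;t).
\]
For absolutely summable $\beta_q$ this is standard. The partial products $\prod_{q\le N}(1+\beta_q)$ are exactly the subset sums over subsets of $\{1,\dots,N\}$. Replacing every $\beta_q$ by $\bar\beta_q$ gives the majorant $\prod_q(1+\bar\beta_q)$, so dominated convergence lets $N\to\infty$.

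\emph{Expanding each block.} Next I expand each $\beta_{q_j}$ into its double series in $(a_j,m_j)$. The product $\prod_j\beta_{q_j}$ becomes a sum over $(a_1,\dots,a_d,m_1,\dots,m_d)$ of
\[
 \frac{(-t)^{\sum m_j}\,u^{\sum(a_j-1)}}{\prod a_j!\,\prod q_j^{a_j+m_j}}.
\]
The full family indexed by $d$, $q_1<\cdots<q_d$, $a_j\ge2$ and $m_j\ge0$ is absolutely summable. The sum of absolute values is bounded by
\[
 \sum_{d}\sum_{q_1<\cdots<q_d}\prod_j\bar\beta_{q_j}=\prod_q(1+\bar\beta_q)<\infty.
\]
Fubini therefore permits interchanging the order. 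I will sum over $q_1<\cdots<q_d$ innermost, with $d$ and $(a_j,m_j)$ fixed. By the convention \eqref{eq:mzv-convention}, that inner sum is exactly $\zeta(a_1+m_1,\ldots,a_d+m_d)$. This MZV converges because $a_d+m_d\ge2$. The result is \eqref{eq:MZVreg}.

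\emph{Main obstacle.} I expect the crux to be the absolute-summability bound itself, and specifically the $q=1$ term. There the $m$-series is $\sum_m t^m$, which is finite only because $t<1$. This is exactly where the regulator is indispensable, and it is already handled by Lemma~\ref{lem:regabs}. For the final claim I will argue as follows. The majorant $\prod_q(1+\bar\beta_q)$ depends only on $R$, so the expansion converges absolutely and uniformly on $|u|\le R$. Since $R$ is arbitrary, it converges absolutely and uniformly on every compact subset of the $u$-plane. The coefficient of each $u^{r-1}$ can then be read off termwise.
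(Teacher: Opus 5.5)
Your proposal is correct and follows the same route as the paper. Like the paper, you substitute \eqref{eq:geom-reg} into each factor, select nonconstant terms from $d$ distinct factors $q_1<\cdots<q_d$, and sum over these indices innermost to obtain $\zeta(a_1+m_1,\ldots,a_d+m_d)$, with Lemma~\ref{lem:regabs} licensing the rearrangements; your majorant $\prod_q(1+\bar\beta_q)$ makes explicit the step the paper leaves implicit, namely that $\sum_q\bar\beta_q<\infty$ gives absolute summability of the whole multi-indexed family.
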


\begin{proof}
Insert \eqref{eq:geom-reg} into each factor of
\eqref{eq:Hcal-t}.  Selecting nonconstant terms from exactly $d$
distinct factors, indexed by
\[
        1\leq q_1<\cdots<q_d,
\]
gives
\[
 \prod_{j=1}^d
 \frac{u^{a_j-1}(-t)^{m_j}}
 {a_j!\,q_j^{a_j+m_j}}.
\]
Summing over the ordered indices $q_1<\cdots<q_d$ produces
\[
        \zeta(a_1+m_1,\ldots,a_d+m_d).
\]
Lemma~\ref{lem:regabs} justifies all rearrangements, giving
\eqref{eq:MZVreg}.
\end{proof}

Two gradings are now visible directly in \eqref{eq:MZVreg}.  A term
of depth $d$ contains $d$ integers
\[
        a_1,\ldots,a_d,
\]
whereas its power of $u$ is
\[
        \sum_{j=1}^d(a_j-1).
\]
If we set
\[
        \nu_j=a_j-1,
\]
then
\[
        \nu=(\nu_1,\ldots,\nu_d)
\]
has total size
\[
        |\nu|=\sum_j\nu_j
\]
and length $d$.  Thus the two indices that appeared combinatorially
in the array $C_{r,d}$ arise naturally on the MZV side as well.

\subsection{Removing the regulator}

It remains to justify passage to the endpoint $t=1$.  We do this at
the level of the product, where the convergence is straightforward.

\begin{lemma}[Abel limit]\label{lem:Abel}
As $t\uparrow1$,
\[
        \mathcal H_t(u)\longrightarrow\mathcal H(u)
\]
locally uniformly in $u$.  Consequently, for every fixed $r$,
\begin{equation}\label{eq:Abel-Cr}
        [u^{r-1}]\mathcal H_t(u)
        \longrightarrow C_r.
\end{equation}
\end{lemma}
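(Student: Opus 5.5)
The plan is to work directly with the product \eqref{eq:Hcal-t}, exactly as in Lemma~\ref{lem:normal}, and to show that the factors converge to those of $\mathcal H(u)$ uniformly in $q$ with a summable majorant. We write the $q$th factor as $1+B_q^{(t)}(u)$, where
\[
 B_q^{(t)}(u)=\sum_{a\geq2}\frac{u^{a-1}}{a!\,q^{a-1}(q+t)},
\]
so that $B_q^{(1)}=B_q$ is the function of \eqref{eq:Bq}. The key observation is that the dependence on $t$ sits only in the scalar factor $1/(q+t)$. Hence
\[
 B_q^{(t)}(u)=\frac{q+1}{q+t}\,B_q(u).
\]
For $0\leq t\leq1$ the ratio $(q+1)/(q+t)$ lies in $[1,2]$, and it tends to $1$ uniformly in $q$: indeed $\bigl|\frac{q+1}{q+t}-1\bigr|=\frac{1-t}{q+t}\leq 1-t$ for $q\geq1$.

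\textbf{Step 1 (uniform majorant).} For $|u|\leq R$ we have the bound $|B_q(u)|\leq C_R q^{-2}$ already recorded before \eqref{eq:Crd}. It follows that $|B_q^{(t)}(u)|\leq 2C_Rq^{-2}$ uniformly for $t\in[0,1]$ and $|u|\leq R$. The products $\mathcal H_t$ therefore converge normally, with a bound independent of $t$. In particular $|\mathcal H_t(u)|\leq \exp(2C_R\zeta(2))$ on the disk.

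\textbf{Step 2 (convergence of the product).} We control the difference of the products by the standard telescoping estimate
\[
\Bigl|\prod_q(1+x_q)-\prod_q(1+y_q)\Bigr|\leq \sum_q|x_q-y_q|\,\exp\Bigl(\sum_q(|x_q|+|y_q|)\Bigr).
\]
Applying it with $x_q=B_q^{(t)}(u)$ and $y_q=B_q(u)$, and using $|x_q-y_q|\leq(1-t)|B_q(u)|\leq(1-t)C_Rq^{-2}$, we get
\[
|\mathcal H_t(u)-\mathcal H(u)|\leq (1-t)\,C_R\zeta(2)\,e^{3C_R\zeta(2)}.
\]
The right-hand side tends to $0$ uniformly on $|u|\leq R$, which is the asserted locally uniform convergence.

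\textbf{Step 3 (coefficients).} Each $\mathcal H_t$ is entire in $u$, being a normally convergent product of entire functions. Cauchy's formula on the circle $|u|=R$ then gives
\[
\bigl|[u^{r-1}]\mathcal H_t-[u^{r-1}]\mathcal H\bigr|\leq R^{-(r-1)}\sup_{|u|=R}|\mathcal H_t-\mathcal H|\to0.
\]
Together with \eqref{eq:Crdef}, this yields \eqref{eq:Abel-Cr}.

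The only step needing care is the uniform-in-$q$ control near the endpoint, i.e.\ the $q=1$ factor where the geometric series of \eqref{eq:geom-reg} fails. Working at the level of the product, rather than with the MZV expansion, avoids this entirely: the $t$-dependence factors out of each term as $(q+1)/(q+t)$, and this factor is harmless even at $q=1$.
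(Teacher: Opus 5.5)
Your proof is correct and follows the paper's argument: a $t$-uniform $O(q^{-2})$ majorant for the factors, convergence of the products on compact $u$-sets, and Cauchy's formula for the coefficients. The only difference is in how the product convergence is obtained. The paper combines factorwise convergence with the uniform majorant, whereas your factorization $B_q^{(t)}(u)=\frac{q+1}{q+t}B_q(u)$, together with the telescoping estimate, gives an explicit $O(1-t)$ rate.
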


\begin{proof}
Write
\[
 b_q(t,u)
 :=
 \sum_{a\geq2}
 \frac{u^{a-1}}
 {a!\,q^{a-1}(q+t)}.
\]
For $0\leq t\leq1$ and $u$ in a fixed compact set,
\[
        |b_q(t,u)|\leq Cq^{-2}
\]
with a constant $C$ independent of $q$ and $t$.  Hence the products
\[
        \prod_{q\geq1}(1+b_q(t,u))
\]
converge normally, uniformly in $t\in[0,1]$ on compact $u$-sets.
Since
\[
        b_q(t,u)\longrightarrow b_q(1,u)
\]
for each $q$, the products converge locally uniformly to
$\mathcal H(u)$.  Coefficient convergence follows from Cauchy's
integral formula.
\end{proof}

Combining Theorem~\ref{thm:MZVreg} with
Lemma~\ref{lem:Abel} gives an MZV representation of each fixed-excess
constant.

\begin{corollary}[MZV expansion of a fixed-excess constant]
\label{cor:CrMZV}
For every $r\geq2$,
\begin{align}
 C_r
 =
 \lim_{t\uparrow1}
 \sum_{d=1}^{r-1}
 \sum_{\substack{a_1,\ldots,a_d\geq2\\
                 \sum_j(a_j-1)=r-1}}
 &
 \frac{1}{a_1!\cdots a_d!}
 \nonumber\\
 {}\times
 &
 \sum_{m_1,\ldots,m_d\geq0}
 (-t)^{m_1+\cdots+m_d}
 \zeta(a_1+m_1,\ldots,a_d+m_d).
 \label{eq:CrMZV}
\end{align}
\end{corollary}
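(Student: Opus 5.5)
The plan is to combine the Abel limit of Lemma~\ref{lem:Abel} with the regulated expansion of Theorem~\ref{thm:MZVreg}, extracting the coefficient of $u^{r-1}$ before letting $t\uparrow1$. By \eqref{eq:Abel-Cr}, $C_r=\lim_{t\uparrow1}[u^{r-1}]\mathcal H_t(u)$. It therefore suffices to show that, for each fixed $0\le t<1$, the coefficient $[u^{r-1}]\mathcal H_t(u)$ equals the finite sum over $d$ and $(a_1,\ldots,a_d)$ appearing inside the limit in \eqref{eq:CrMZV}.

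To identify the coefficient, fix $t<1$ and $R>0$. Theorem~\ref{thm:MZVreg} writes $\mathcal H_t(u)$ as a series of monomials $u^{\sum_j(a_j-1)}$ that converges absolutely and uniformly on $|u|\le R$. This absolute convergence is exactly what Lemma~\ref{lem:regabs} supplies, with $R^{a-1}$ dominating $|u|^{a-1}$. We may therefore regroup the series by powers of $u$, obtaining a power series in $u$ with the same sum on $|u|<R$. By uniqueness of Taylor coefficients (or by Cauchy's formula applied termwise, which is legitimate under uniform absolute convergence), $[u^{r-1}]\mathcal H_t(u)$ is the sum of all terms with $\sum_j(a_j-1)=r-1$.

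Since each $a_j\ge2$, the constraint $\sum_j(a_j-1)=r-1$ forces $1\le d\le r-1$ and leaves only finitely many tuples $(a_1,\ldots,a_d)$. For each such tuple, the remaining inner sum over $m_1,\ldots,m_d\ge0$ is absolutely convergent for $t<1$, since it is dominated by a sub-sum of \eqref{eq:regabs}. Consequently the coefficient is exactly the expression inside the limit of \eqref{eq:CrMZV}, with the inner $m$-sums kept intact. Taking $t\uparrow1$ and invoking \eqref{eq:Abel-Cr} then finishes the argument.

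The only delicate point is making sure the limit is placed outside the whole expression. No interchange of $\lim_{t\uparrow1}$ with the $m$-sums is asserted, and none would be justified, since at $t=1$ the $q_1=1$ contribution is only conditionally convergent. The corollary is stated accordingly, so the main obstacle reduces to the bookkeeping check that the coefficient extraction is valid at fixed $t<1$, which the absolute convergence settles.
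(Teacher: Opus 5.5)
Your proposal is correct and follows the paper's own route: the paper obtains the corollary by combining Theorem~\ref{thm:MZVreg} with Lemma~\ref{lem:Abel}, i.e.\ extracting $[u^{r-1}]\mathcal H_t(u)$ at fixed $t<1$ (justified by absolute convergence) and then letting $t\uparrow1$ via \eqref{eq:Abel-Cr}, with the limit kept outside the $m$-sums. One small correction to your closing remark: at $t=1$ the $q_1=1$ contribution is not merely conditionally convergent but actually divergent (its geometric series becomes $\sum_m(-1)^m$), which only strengthens your point that the limit cannot be moved inside.
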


The formula becomes more informative if we retain the partition
$\nu$ rather than summing immediately over all partitions of
$r-1$.

\subsection{The MZV block associated with a partition}

Let
\[
        \nu=(\nu_1,\ldots,\nu_d)\vdash r-1,
\]
and let $\operatorname{Perm}(\nu)$ denote the set of distinct
permutations of its parts.  The corresponding Young-shape family is
\[
        (\nu_1+1,\ldots,\nu_d+1,1^m),
        \qquad m\geq0.
\]
On the MZV side, the same partition determines a block of depth $d$.

\begin{theorem}[Partition--MZV correspondence]
\label{thm:CnuMZV}
For every partition
$\nu=(\nu_1,\ldots,\nu_d)\vdash r-1$,
\begin{align}
 C_\nu
 =
 \lim_{t\uparrow1}
 \frac{1}
 {\prod_{j=1}^d(\nu_j+1)!}
 \sum_{\sigma\in\operatorname{Perm}(\nu)}
 \sum_{m_1,\ldots,m_d\geq0}
 &
 (-t)^{m_1+\cdots+m_d}
 \nonumber\\
 {}\times&
 \zeta\!\left(
 \nu_{\sigma(1)}+1+m_1,\ldots,
 \nu_{\sigma(d)}+1+m_d
 \right).
 \label{eq:CnuMZV}
\end{align}
In particular, the length
\[
        d=\ell(\nu)
\]
is the depth of every MZV occurring in the block associated with
$\nu$.
\end{theorem}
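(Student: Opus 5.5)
The plan is to introduce a regulated version of $C_\nu$ itself and show two things: it converges to $C_\nu$ as $t\uparrow1$, and for $t<1$ it equals the block on the right of \eqref{eq:CnuMZV}. For $0\le t\le1$ we set
\[
 a_k^{(t)}(q):=\frac{1}{(k+1)!\,q^k(q+t)},
 \qquad
 C_\nu(t):=\frac{1}{\prod_j m_j(\nu)!}
 \sum_{\substack{q_1,\ldots,q_d\geq1\\ q_a\neq q_b}}
 \prod_{s=1}^d a^{(t)}_{\nu_s}(q_s).
\]
At $t=1$ this is exactly \eqref{eq:Cnu}, so $C_\nu(1)=C_\nu$. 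It is also the coefficient of $u^{r-1}$ in $\mathcal H_t(u)$ restricted to selections whose exponent multiset is $\nu$, by the same grouping argument as in Theorem~\ref{thm:partitiondecomp}.

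\textbf{Step 1 (endpoint continuity).} For $t\in[0,1]$ we have $0\le a^{(t)}_k(q)\le 1/((k+1)!\,q^{k+1})$, and this bound is summable over distinct $d$-tuples because every $\nu_s\ge1$. Each term is continuous in $t$. By dominated convergence, $C_\nu(t)\to C_\nu$ as $t\uparrow1$. This is a partition-level analogue of Lemma~\ref{lem:Abel}, and it avoids having to extract individual $\nu$-pieces from the coefficient limit.

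\textbf{Step 2 (MZV form for $t<1$).} First convert the sum over distinct ordered $d$-tuples, weighted by $1/\prod m_j(\nu)!$, into a sum over strictly increasing $q_1<\cdots<q_d$. A set of $d$ distinct indices carries the multiset $\nu$ in exactly $|\operatorname{Perm}(\nu)|=d!/\prod m_j(\nu)!$ distinct ways. Hence
\[
 C_\nu(t)=\sum_{\sigma\in\operatorname{Perm}(\nu)}\ \sum_{q_1<\cdots<q_d}\ \prod_{j=1}^d a^{(t)}_{\nu_{\sigma(j)}}(q_j),
\]
where $\operatorname{Perm}(\nu)$ is read as the set of distinct rearrangements $(\nu_{\sigma(1)},\ldots,\nu_{\sigma(d)})$. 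Next insert \eqref{eq:geom-reg} with $a=\nu_{\sigma(j)}+1$ in each factor. The factorial prefactor $\prod_j(\nu_{\sigma(j)}+1)!$ equals $\prod_j(\nu_j+1)!$ independently of $\sigma$, so it comes outside the sum. Summing over $q_1<\cdots<q_d$ for fixed $m_1,\ldots,m_d$ produces $\zeta(\nu_{\sigma(1)}+1+m_1,\ldots)$. These MZVs converge, since the last argument is at least $2$.

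\textbf{Step 3 (justifying the interchange).} The interchange of the $q$-sums with the $m$-sums is legitimate by absolute convergence, exactly as in Lemma~\ref{lem:regabs}. Concretely, the absolute series is
\[
 \sum_{q_1<\cdots<q_d}\prod_j\frac{1}{q_j^{\nu_{\sigma(j)}}(q_j-t)},
\]
which is finite for $t<1$. This identifies $C_\nu(t)$ with the expression inside the limit in \eqref{eq:CnuMZV}, and combining with Step~1 gives the theorem.

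The depth statement is immediate, since every MZV in the block has exactly $d$ arguments.

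The main obstacle is the bookkeeping in Step~2, namely the ordered-to-increasing conversion and the multiplicity factors. I would check it against $\nu=(1,1)$, where $|\operatorname{Perm}(\nu)|=1$ and the ordered sum over distinct pairs, divided by $2!$, equals the increasing sum. The limit itself is routine once it is taken through $C_\nu(t)$ rather than through the divergent endpoint expansion.
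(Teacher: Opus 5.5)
Your proposal is correct and follows essentially the same argument as the paper. You introduce a regulated $C_\nu(t)$, justify the geometric expansion for $t<1$ by absolute convergence (your majorant $\prod_j q_j^{-\nu_{\sigma(j)}}(q_j-t)^{-1}$ is what Lemma~\ref{lem:regabs} supplies), and pass to $t\uparrow1$ by dominated convergence with the bound $q^{-\nu_{\sigma(j)}-1}$. The only difference is bookkeeping: you define $C_\nu(t)$ in the ordered-tuple form of \eqref{eq:Cnu} and carry out the conversion to increasing indices over $\operatorname{Perm}(\nu)$ explicitly, whereas the paper defines $C_\nu(t)$ directly in the increasing form and leaves implicit that its value at $t=1$ equals $C_\nu$.
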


The limit above is understood in the Abel sense; no ordinary
convergence of the corresponding unregulated alternating MZV series
at $t=1$ is asserted or required.

\begin{proof}[Proof of Theorem \ref{thm:CnuMZV}]
For $0\leq t<1$, define
\[
 C_\nu(t)
 :=
 \frac{1}
 {\prod_{j=1}^d(\nu_j+1)!}
 \sum_{\sigma\in\operatorname{Perm}(\nu)}
 \sum_{1\leq q_1<\cdots<q_d}
 \prod_{j=1}^d
 \frac{1}
 {q_j^{\nu_{\sigma(j)}}(q_j+t)}.
\]
Applying \eqref{eq:geom-reg} to each factor gives
\begin{align*}
 C_\nu(t)
 =
 \frac{1}
 {\prod_{j=1}^d(\nu_j+1)!}
 \sum_{\sigma\in\operatorname{Perm}(\nu)}
 \sum_{m_1,\ldots,m_d\geq0}
 &
 (-t)^{m_1+\cdots+m_d}\\
 {}\times&
 \zeta\!\left(
 \nu_{\sigma(1)}+1+m_1,\ldots,
 \nu_{\sigma(d)}+1+m_d
 \right),
\end{align*}
where absolute convergence follows from
Lemma~\ref{lem:regabs}.

On the other hand, for $0\leq t\leq1$,
\[
 \frac{1}
 {q_j^{\nu_{\sigma(j)}}(q_j+t)}
 \leq
 \frac{1}
 {q_j^{\nu_{\sigma(j)}+1}}.
\]
Hence
\[
 \prod_{j=1}^d
 \frac{1}
 {q_j^{\nu_{\sigma(j)}}(q_j+t)}
 \leq
 \prod_{j=1}^d
 \frac{1}
 {q_j^{\nu_{\sigma(j)}+1}}.
\]
Since every $\nu_{\sigma(j)}+1\geq2$, the series
\[
 \sum_{1\leq q_1<\cdots<q_d}
 \prod_{j=1}^d
 q_j^{-\nu_{\sigma(j)}-1}
\]
converges absolutely.  Dominated convergence therefore gives
\[
        C_\nu(t)\longrightarrow C_\nu
        \qquad (t\uparrow1),
\]
which proves \eqref{eq:CnuMZV}.
\end{proof}

Theorem~\ref{thm:CnuMZV} identifies the second coordinate of
the triangular array:
\[
 \boxed{
 \begin{array}{rcl}
 r-1=|\nu|
 &\longleftrightarrow&
 \text{Young-lattice excess},\\[1mm]
 d=\ell(\nu)
 &\longleftrightarrow&
 \text{number of non-unit rows}
 \longleftrightarrow
 \text{MZV depth}.
 \end{array}}
\]
Thus a single partition $\nu$ determines simultaneously a family on
a Young-lattice diagonal and a block in the multiple-zeta expansion.

Summing Theorem~\ref{thm:CnuMZV} over all partitions of $r-1$ gives
the row constant $C_r$.  Summing instead over all $r$ while keeping $d$ fixed,
and using the positive column decomposition established in Section~\ref{sec:columns},
gives the column mass $S_d$.  In this precise sense, the fixed-excess
and fixed-depth decompositions are not two unrelated expansions:
they are the two coordinate projections of the same array
$C_{r,d}$.

At $u=1$ the excess grading is forgotten, and the product reduces to
\[
        \mathscr H(1,y)
        =
        \prod_{q\geq1}
        \left[
        1+y\left(
        \frac{e^{1/q}}{1+1/q}-1
        \right)
        \right],
\]
recovering the depth generating function associated with the
Espinosa--Hucht expansion.  The additional variable $u$ is what
retains the Young-lattice excess and resolves each depth contribution
into the fixed-excess components studied here.

The same two-dimensional structure thus arises from both sides:
combinatorially through partitions on fixed Young-lattice diagonals,
and analytically through the depth organization of multiple zeta
values.

\section{Computational verification}
\label{sec:computations}

The results above are exact and do not depend on numerical
computation.  Nevertheless, the equivalent descriptions of the
constants provide useful independent checks of the construction.  We
compare coefficient extraction from the limiting product, the
partition refinement, the double grading, and the finite-$n$
asymptotics.

\subsection{Product and partition expansions}

For a prescribed order $R$, the coefficients
\[
        C_r=[u^{r-1}]\mathcal H(u),
        \qquad 2\leq r\leq R,
\]
can be computed by truncating
\[
 \mathcal H_Q(u)
 =
 \prod_{q=1}^{Q}
 \left(
 1+\sum_{k=1}^{R-1}
 \frac{u^k}{(k+1)!\,q^k(q+1)}
 \right)
 \pmod{u^R}.
\]
The omitted factors differ from $1$ by $O(q^{-2})$ uniformly on
compact $u$-sets, so the coefficients converge rapidly as
$Q\to\infty$.

Symbolic expansion for the first several diagonals agrees exactly
with the independent partition formula
\[
        C_r=\sum_{\nu\vdash r-1}C_\nu.
\]
For example,
\[
 C_5
 =
 C_{(4)}+C_{(3,1)}+C_{(2,2)}
 +C_{(2,1,1)}+C_{(1,1,1,1)},
\]
and both calculations give the value of $C_5$ in
\eqref{eq:C5}.  This agreement checks both the repeated-part
multiplicities in \eqref{eq:Cnu} and the partition indexing of the
diagonal.

The same computation applied to
\[
 \mathscr H(u,y)
 =
 \prod_{q\geq1}
 \left[
 1+
 y\sum_{k\geq1}
 \frac{u^k}{(k+1)!\,q^k(q+1)}
 \right]
\]
constructs the triangular array $(C_{r,d})$.  To every prescribed
finite order it verifies
\[
        \sum_{d=1}^{r-1}C_{r,d}=C_r,
        \qquad
        C_{r,1}=\rho_r.
\]
Setting $u=1$ gives
\[
 \mathscr H(1,y)=\prod_{q\geq1}(1+yb_q),
 \qquad
 b_q=\frac{e^{1/q}}{1+1/q}-1,
\]
so $S_d$ can be computed either by summing the $d$th column
$(C_{r,d})_{r\geq d+1}$ or as the $d$th elementary symmetric
function of $(b_q)_{q\geq1}$.  The two procedures agree, providing an
independent check of the excess--depth grading and of
\[
        \sum_{d\geq0}S_d=e^\gamma.
\]

\subsection{Finite-$n$ asymptotics}

We also tested Theorem~\ref{thm:main} directly from
\[
 D_r(n;z)
 =
 [u^{r-1}]
 \prod_{q=1}^{n}
 \left(
 1+z\frac{e^{u/q}-1}{u}
 \right).
\]
For fixed $r$, it predicts
\[
        \frac{D_r(n;1)}{n}\longrightarrow C_r,
        \qquad
        \frac{A_r(n)}{n\log\log n}\longrightarrow C_r.
\]
Only terms through degree $r-1$ are needed, so the coefficients can
be updated recursively as the factors are multiplied.

\begin{table}[ht]
\centering
\begin{tabular}{c c c c}
\hline
$r$ & $n$ &
$\displaystyle D_r(n;1)/n$ &
$\displaystyle A_r(n)/(n\log\log n)$\\
\hline
3 & 50   & 0.1951970510 & 0.2833411032\\
3 & 200  & 0.1959878101 & 0.2525215491\\
3 & 1000 & 0.1962017916 & 0.2351219256\\
4 & 50   & 0.0621389082 & 0.0973074409\\
4 & 200  & 0.0626605203 & 0.0857797644\\
4 & 1000 & 0.0628011863 & 0.0789918338\\
5 & 50   & 0.0166973227 & 0.0277625236\\
5 & 200  & 0.0169108039 & 0.0243225172\\
5 & 1000 & 0.0169683497 & 0.0222265027\\
\hline
\end{tabular}
\caption{Finite-$n$ values for the unweighted and logarithmically
weighted fixed-excess normalizations.  The first ratio converges
rapidly to $C_r$, while the second exhibits the slower
$\log\log n$ convergence predicted by Theorem~\ref{thm:main}.}
\label{tab:finite-n}
\end{table}

The distribution of the number of parts provides a further check.
With
\[
 \mathbb P(I_{n,r}=i)
 =
 \frac{E_{i,i+r-1}(n)}{D_r(n;1)},
\]
the computations give
\[
 \mathbb E[I_{n,r}]-\log n=O(1),
 \qquad
 \operatorname{Var}(I_{n,r})-\log n=O(1),
\]
in agreement with Lemma~\ref{lem:concentration}.  Thus the numerical
data display the two scales underlying Theorem~\ref{thm:main}: linear
growth of the diagonal mass and concentration of the number of parts
near $\log n$.

All these checks use only truncated power series and finite products
and can be reproduced in a standard computer algebra system.  They
give three independent routes to the same constants: coefficient
extraction from $\mathcal H(u)$, summation of the $C_\nu$, and the
finite-$n$ limit $D_r(n;1)/n$.

\section{Discussion and outlook}
\label{sec:discussion}

The main outcome of this work is a unified organization of the
partition structures arising on fixed-excess Young-lattice
diagonals.  Rather than treating the non-hook terms as isolated
corrections to Segovia's hook families, the generating-function
framework shows that they are canonically indexed by the partitions
\[
        \nu\vdash r-1,
\]
with
\[
        C_r=\sum_{\nu\vdash r-1}C_\nu.
\]
Retaining the length of $\nu$ produces the finer array
\[
        C_{r,d}
        =
        [u^{r-1}y^d]\mathscr H(u,y),
\]
which is the central organizing object of the paper.  Its two
coordinates record, respectively, the total Young-lattice excess
$r-1$ and the number $d$ of non-unit rows.

The same second coordinate acquires an analytic meaning under the
Abel-regularized multiple-zeta correspondence: each partition $\nu$
determines an MZV block of depth
\[
        d=\ell(\nu).
\]
Consequently, the row and column sums of the same positive array give
the fixed-excess and fixed-depth decompositions,
\[
        C_r=\sum_d C_{r,d},
        \qquad
        S_d=\sum_r C_{r,d},
\]
and hence
\[
 e^\gamma
 =
 1+\sum_{r\geq2}\sum_{d=1}^{r-1}C_{r,d}
 =
 1+\sum_{d\geq1}\sum_{r\geq d+1}C_{r,d}.
\]
The appearance of $e^\gamma$ and its MZV depth decomposition is
consistent with the earlier work of Espinosa and Hucht and is not
claimed here as a new identity.  What the present construction adds
is the excess grading: it resolves the depth decomposition according
to Young-lattice diagonals and, conversely, each fixed diagonal
according to MZV depth.

Several directions remain open.  First, the marked asymptotic
\[
        D_r(n;z)\sim n^zK_r(z)
\]
contains substantially more information than is needed for the
fixed-excess limit.  Its form suggests a mod-Poisson limit theorem
for the number of parts and, in particular, Gaussian fluctuations
around $\log n$ on the scale $\sqrt{\log n}$.  Establishing such a
limit, together with higher-order asymptotic expansions for
$A_r(n)$, would refine the first-order result obtained here.

A second direction concerns the arithmetic structure of the
constants $C_\nu$ and $C_{r,d}$.  Already at low excess they involve
zeta values and their products, while the regulated formulation
places them naturally in the setting of multiple zeta values.  It
would be interesting to determine which reductions are available at
fixed excess or fixed depth and whether the triangular organization
reveals identities that are less visible in the usual MZV grading.

Finally, throughout this paper the excess $r-1$ is fixed while $n$
tends to infinity.  The coefficients
\[
        C_r=[u^{r-1}]\mathcal H(u)
\]
themselves form a sequence whose behavior as $r\to\infty$ is not
addressed here.  Understanding this regime, as well as joint limits
in which $r$ grows with $n$, could reveal a different part of the
partition geometry underlying the original expansion.

More broadly, the analysis illustrates the advantage of passing from
individual partition shapes to a generating object that retains the
relevant statistics simultaneously.  A shape-by-shape calculation
on the Young lattice becomes a two-dimensional structure in which
partition excess, non-unit rows, asymptotic mass, and MZV depth are
encoded by the same coefficients.  It is this unified organization,
rather than any new claim concerning the Riemann hypothesis itself,
that is the principal outcome of the paper.

\section*{Acknowledgments}
This work was partly supported by DGAPA-PAPIIT grant number IN112725.

\section*{Data availability}

No datasets were generated or analyzed in this study.
The numerical computations reported in Section~\ref{sec:computations}
are reproducible directly from the explicit formulas given in the text.

\section*{Declaration of AI-assisted technologies}

During the development of this work, the author used ChatGPT
(OpenAI) as an AI-assisted research and writing tool.  It was used
for mathematical exploration, checking and refining arguments,
computational verification, literature exploration, and assistance
with the organization and editing of the manuscript.  All mathematical
statements, proofs, computations, references, and final wording were
independently reviewed and verified by the author, who takes full
responsibility for the content of the manuscript.

\end{document}